\documentclass[onefignum,onetabnum]{siamart190516}
\usepackage[utf8]{inputenc}
\usepackage{mathtools}
\usepackage{dsfont}
\usepackage{amsmath}
\usepackage{amssymb}
\usepackage{caption}
\usepackage{subcaption}
\usepackage{algorithmicx}
\usepackage{algorithm}
\usepackage{algpseudocode}
\usepackage{comment}

\title{Compound Krylov subspace methods for parametric linear systems}
\author{Antti Autio and Antti Hannukainen} 
\date{Spring 2021}

\usepackage{bm}
\renewcommand{\vec}{\bm}

\newsiamthm{remark}{Remark}
\usepackage{color}

\begin{document}

\maketitle

% REQUIRED
\begin{abstract}
\noindent
In this work, we propose a reduced basis method for efficient solution of parametric linear systems. The coefficient matrix is assumed to be a linear matrix-valued function that is symmetric and positive definite for admissible values of the parameter $\vec{\sigma}\in \mathbb{R}^s$. We propose a solution strategy where one first computes a basis for the appropriate compound Krylov subspace and then uses this basis to compute a subspace solution for multiple $\vec{\sigma}$. Three kinds of compound Krylov subspaces are discussed. Error estimate is given for the subspace solution from each of these spaces. Theoretical results are demonstrated by numerical examples related to solving parameter dependent elliptic PDEs using the finite element method (FEM).
\end{abstract}

% REQUIRED
\begin{keywords}
subspace method, model order reduction, Krylov subspace, reduced basis methods
\end{keywords}

% REQUIRED
\begin{AMS}
    65F10, 65N30, 65N15
\end{AMS}

\section{Introduction}

Denote by $\mathbb{S}^n_{++} \subset \mathbb{R}^{n\times n}$ the set of real, symmetric, and positive definite (s.p.d.) $n\times n$ -- matrices. Let $\vec{b} \in \mathbb{R}^n$, \emph{parameter set} $S\subset \mathbb{R}^s$, and $A:\mathbb{R}^s \mapsto \mathbb{R}^{n\times n}$ be a \emph{linear matrix-valued function} such that $A(S) \subset \mathbb{S}^n_{++}$. This work concerns efficient solution of the linear system: find $\vec{x}(\vec{\sigma}) \in \mathbb{R}^{n}$ satisfying
\begin{equation}
    A(\vec{\sigma})\vec{x}(\vec{\sigma}) = \vec{b}
    \label{eq:linsys}
\end{equation}
for multiple values of the parameter $\vec{\sigma} \in S$. We call $\vec{x}: S\mapsto \mathbb{R}^n$ defined point-wise by \eqref{eq:linsys} as the \emph{parameter-to-solution map}. 

Our motivation for studying \eqref{eq:linsys} arises from the solution of elliptic PDEs with spatially varying coefficient functions using the finite element method (FEM), see Section~\ref{sec:fem}. As parameter dependent PDEs are related to several interesting engineering problems, their solution has attracted lots of attention. Research has been done both before and after spatial discretization. Parametric PDEs have especially been studied in the context of uncertainty quantification, where the parameter $\vec{\sigma}$ is typically related to a truncated Polynomial Chaos or Karhunen-Loève expansion of a random coefficient field, see \cite{BaTeZo:04,BaNoTe:07,ScGi:11}. 

Currently, there exist three main approaches for the solution of \eqref{eq:linsys} or the underlying parametric PDE. One can approximate the parameter-to-solution map $\vec{x}(\vec{\sigma})$ using a Galerkin method in the parameter space, see \cite{BaTeZo:04,ScGi:11}. These methods often combine discretization of spatial and parameter dimensions. Second alternative is to apply a collocation method, where $\vec{x}(\vec{\sigma})$  is first evaluated at collocation points and then approximated by interpolation, \cite{BaNoTe:07}. To break the curse of dimensionality, several sparse and adaptive families of collocation points have been proposed \cite{BaCoDa:17}. Finally, one can construct a reduced basis or a subspace of $\mathbb{R}^n$ that can accurately represent the solution $\vec{x}(\vec{\sigma})$ for desired $\vec{\sigma} \in S$ \cite{QuMaNe:16}. The reduced basis is constructed by evaluating the solution at sampling points that are selected, e.g., by a greedy algorithm \cite{BuMaPaPr:12}. The solution $\vec{x}(\vec{\sigma})$ is then approximated point-wise by computing subspace solution using the reduced basis.

In this work, we propose a reduced basis method for the solution of \eqref{eq:linsys} that is inspired by the Conjugate Gradient (CG) method. The solution $\vec{x}(\vec{\sigma})$ of the linear system \eqref{eq:linsys} for a single $\vec{\sigma} \in S$ can be approximated efficiently using CG if the condition number of $A(\vec{\sigma})$ is close to one. The CG method is an iteration for finding a sequence of approximate solutions to linear systems with s.p.d.~coefficient matrices, see \cite{HeSt:1953} and \cite{axelsson:1994}. Each CG-iterate is the subspace solution from the \emph{Krylov subspace} corresponding to the linear system and the iteration index. The $j$th Krylov subspace related to the model problem~\eqref{eq:linsys} is defined as
\begin{equation}
K_j(A(\vec{\sigma}),\vec{b}) := \mathop{span}\{ \vec{b},A(\vec{\sigma}) \vec{b},\ldots, A(\vec{\sigma})^{j-1} \vec{b} \} \mbox{ for any $j \in \mathbb{N}$}.
\end{equation}
Observe that $K_j(A(\vec{\sigma}),\vec{b})$ is dependent on $\vec{\sigma}$.

The convergence of CG is well studied; the estimated number of iterations required to compute an approximate solution with desired error grows with the condition number. The condition numbers of coefficient matrices related to the FE-solution of elliptic PDEs are large and increase when the applied finite element mesh is refined. Hence, if the CG method is used in this setting, a preconditioner is required to improve convergence. 

In this work, we propose one kind of exact and two kinds of approximate \emph{compound Krylov} (CK) subspace methods to efficiently compute approximate solutions to \eqref{eq:linsys} for multiple $\vec{\sigma} \in S$. The computation proceeds in two stages:
\begin{enumerate}
    \item {\bf Off-line stage:} Compute a basis for the applied compound Krylov subspace. 
    \item {\bf On-line stage:} Use the basis constructed in the off-line stage to compute subspace solution to \eqref{eq:linsys} for multiple $\vec{\sigma} \in S$.
\end{enumerate}
\noindent In practice, the computational cost related to the first stage is large whereas computing the subspace solution for a favorable $A(\vec{\sigma})$ is fast. Therefore our proposed method is most beneficial when the parameter-to-solution map is evaluated for a large number of parameter vectors $\vec{\sigma}$.

The family of exact compound Krylov subspaces $CK_j(A,\vec{b}) \subset \mathbb{R}^n$ is designed to satisfy the inclusion
\begin{equation}
\label{eq:inclusion}
    K_j(A(\vec{\sigma}),\vec{b}) \subset CK_j(A,\vec{b}) \quad \mbox{for all $\vec{\sigma} \in S$ and $j\in \mathbb{N}$}.
\end{equation}
Observe that $CK_j(A,\vec{b})$ is independent of $\vec{\sigma}$ but dependent on $\vec{b}$ as well as on the matrix-valued function $A$. Due to the inclusion in \eqref{eq:inclusion} and the \emph{best approximation} property of subspace methods, the subspace solution to \eqref{eq:linsys} from $CK_j(A,\vec{b})$ is at least as accurate as the $j$th iterate produced by the CG method for any $\vec{\sigma} \in S$.

Constructing a subspace satisfying the inclusion~\eqref{eq:inclusion} requires treating the $\vec{\sigma}$--dependency of the linear matrix-valued function $A$. We use the linearity of $A$ and define $CK_j(A,\vec{b})$ as the union of subspaces containing the range of the mapping 
\begin{equation}
\label{eq:map}
    \vec{\sigma} \rightarrow A(\vec{\sigma})^k \vec{b} \quad \mbox{for} \quad k\in \{1,\ldots,j\}.
\end{equation}
Particularly, we reformulate the terms $A(\vec{\sigma}) \vec{b}$ as $A(\vec{\sigma})\vec{b} = L(\vec{b}) \vec{\sigma}$, where $L(\vec{b}) \in \mathbb{R}^{n \times s}$ is a \emph{linearisation matrix} independent of $\vec{\sigma}$. This reformulation allows us to easily compute the range of $L(\vec{b})$ that contains $A(\vec{\sigma}) \vec{b}$ for any $\vec{\sigma} \in S$. Such  \emph{linearisation process} can be repeated for terms $A(\vec{\sigma})^k \vec{b}$, and thus, to find a basis for the compound Krylov subspace $CK_j(A,\vec{b})$. Special care must be taken to cope with the \emph{exponentially growing column dimension} of the linearisation matrices.  The dimension and the computational cost are reduced by two kinds of \emph{approximate compound Krylov subspaces} that are defined by including low rank approximations to the linearisation process.

The proposed CK-solvers are subspace methods, and as such they produce the best possible approximation to the exact solution $\vec{x}(\vec{\sigma})$ from the method subspace in the norm associated with the coefficient matrix $A(\vec{\sigma})$. We take advantage of this property in error analysis. Particularly, we show that both kinds of approximate CK-method subspaces approximately contain a solution candidate appearing in the error analysis of the Conjugate Gradient (CG) method for any $\vec{\sigma} \in S$. Our final error estimate guarantees that the CK-solutions have a comparable error with the CG method if sufficiently accurate low rank approximations of the linearisation matrices are used.  

This work is organised as follows. In Section~\ref{sec:bg} we give a brief review of subspace and Conjugate Gradient (CG) methods and discuss how Problem \eqref{eq:linsys} is related to the FE-solution of the Poisson's equation with varying material data or geometry. Compound Krylov subspaces are discussed in Section~\ref{sec:pKrylov}. Implementation of the CK method is outlined in Section~\ref{sec:practical}. The proposed methods and analytical results are  illustrated in Section~\ref{sec:num} by numerical examples.  We conclude with a discussion of the obtained results and future work in Section~\ref{sec:con}

\section{Background} 
\label{sec:bg}
In this section, we first discuss linear matrix-valued functions and their representation. Then we give two examples of linear systems of the type \eqref{eq:linsys} that are related to finite element solution of parametric PDEs. Finally, we briefly review subspace methods and CG error analysis that are a prerequisite for  Section~\ref{sec:pKrylov}.  
\subsection{Linear matrix-valued functions}
Linear matrix-valued functions are defined as usual:
\begin{definition} Function $F:\mathbb{R}^s \mapsto \mathbb{R}^{n \times n}$ is called linear if for any $\alpha \in \mathbb{R}$ and $\vec{\sigma}_1,\vec{\sigma}_2 \in \mathbb{R}^s$
\begin{equation*}
F(\vec{\sigma}_1 + \vec{\sigma}_2) = F(\vec{\sigma}_1) + F(\vec{\sigma}_2) 
\quad 
\mbox{and} \quad 
F(\alpha \vec{\sigma}_1) = \alpha F(\vec{\sigma}_1). 
\end{equation*}
\end{definition}
Naturally, any linear matrix-valued function $F:\mathbb{R}^s \mapsto \mathbb{R}^{n \times n}$ admits the representation $F(\vec{\sigma}) = \sum_{i=1}^s \sigma_i F_i $ for $\{F_i\}_{i=1}^s \subset \mathbb{R}^{n\times n}$ independent of $\vec{\sigma}$. Particularly, there exists $\{A_i\}_{i=1}^s \subset \mathbb{R}^{n\times n}$ independent of $\vec{\sigma}$ such that 
\begin{equation}
\label{eq:sum_rep}
A(\vec{\sigma}) = \sum_{i=1}^s \sigma_i A_i \quad \mbox{for any} \quad  \vec{\sigma} \in \mathbb{R}^s.
\end{equation}
\subsection{Application in FEM}
\label{sec:fem}

The motivation for our proposed methods comes from solving parameter dependent partial differential equations using the finite element method. Let domain $\Omega \subset \mathbb{R}^d$, where the dimension is $d=2$ or $d=3$, have sufficiently regular boundary and $f\in L^2(\Omega)$. Consider the weak form of the \emph{modified Poisson's equation}: find $u_\sigma \in H_0^1(\Omega)$ satisfying
\begin{equation}
    \int_\Omega \nabla u_\sigma \cdot C(\vec{\sigma},x) \nabla v = \int_\Omega f v \quad \text{for all } v\in H_0^1(\Omega).
    \label{eq:weakform}
\end{equation}
We are interested in solving this problem multiple times with different values of $\vec{\sigma}$ using finite elements. Assume the function $C: \mathbb{R}^s \times \mathbb{R}^d \mapsto \mathbb{R}^{d\times d}$ is in the form 
\begin{equation}
    C(\vec{\sigma},x) = \sum_{i=1}^s \psi_i(x) \sigma_i,
    \label{eq:cmatrix}
\end{equation}
where $\psi_i \in L^{\infty}(\Omega; \mathbb{R}^{d\times d})$. The function $C$ is chosen so that $C(\vec{\sigma},x) \in \mathbb{S}_{++}^{d}$ for a.e. $x \in \Omega$ and all $\vec{\sigma}\in S$. In this case, the Lax-Milgram lemma guarantees the existence of a unique solution to \eqref{eq:weakform} for any $\vec{\sigma} \in S$, see, e.g. \cite{evans:1998}.

The weak problem \eqref{eq:weakform} is solved using FEM by limiting it to some finite element space $V_{FE} \subset H^1_0(\Omega)$. This is, one solves the problem: find $u_{\sigma,FE}\in V_{FE}$ satisfying 
\begin{equation}
    \int_\Omega \nabla u_{\sigma,FE} \cdot C(\vec{\sigma},x) \nabla v = \int_\Omega f v \quad \mbox{for all $v\in V_{FE}$}. 
    \label{eq:weakFE}
\end{equation}
The FE-space is finite dimensional and has a basis $\{\phi_i\}_{i=1}^n$. The basis functions $\phi_i$ are defined with the help of a \emph{mesh}, a partition of the domain $\Omega$ to subdomains called elements. The maximum diameter of these elements is called the \emph{mesh size}. For an introduction on FEM, see \cite{Braess:2007,brenner_scott:1994}.

The problem \eqref{eq:weakFE} is equivalent to the matrix equation: find $\hat{\vec{x}}(\vec{\sigma}) \in \mathbb{R}^n$ satisfying $K(\vec{\sigma})\vec{\hat{x}}(\vec{\sigma})=\vec{\hat{b}}$, where $K:\mathbb{R}^s \mapsto \mathbb{R}^{n\times n}$ and $\vec{b} \in \mathbb{R}^n$ are defined as 
\begin{equation}
    K(\vec{\sigma})_{i j} = \int_\Omega \nabla \phi_i \cdot C(\vec{\sigma},x) \nabla \phi_j \quad \mbox{and} \quad \vec{\hat{b}}_i =  \int_\Omega f \phi_i \quad \mbox{for $i,j \in \{1,\ldots,n\}$}.
\label{eq:kmatrix}
\end{equation} 
%
% The function $u$ can be written in this basis as $u=\sum_i \hat{y}_i\phi_i$. Therefore
% %
% \begin{equation*}
%     \vec{\hat{y}}^T K(\vec{\sigma}) \vec{\hat{y}} = \sum_i \sum_j \hat{y}_i \hat{y}_j \int \nabla \phi_i^T C(\vec{\sigma},x) \nabla \phi_j 
%     = \int \nabla u^T C(\vec{\sigma},x) \nabla v
% \end{equation*}
%
The accuracy of the approximate solution produced by the compound Krylov solver depends, among other things, on the condition number of the coefficient matrix, see Theorems~\ref{thm:error1} and~\ref{thm:error2}. If the condition number is close to one, these methods converge rapidly. However, the condition number of $K(\vec{\sigma})$ defined in \eqref{eq:kmatrix} grows with decreasing mesh size and is typically much larger than one leading to slow convergence of CK-methods, see \cite[Chapter B.6]{ToWi:05} for analysis in the case $C(\vec{\sigma},x)=I$. To speed up convergence we improve conditioning by applying a split preconditioner. Let $\bar{K}\in\mathbb{R}^{n\times n}$ satisfy
\begin{equation}
    \bar{K}_{i j} = \int \nabla \phi_i \cdot \bar{C}(x) \nabla \phi_j \quad \mbox{for} \quad i,j\in \{1,\ldots,n\},
    \label{eq:kbar}
\end{equation}
where the function $\bar{C}:\mathbb{R}^d \mapsto \mathbb{R}^{d\times d}$ satisfies the following: there exists $\alpha,\beta\in \mathbb{R}^+$ such that
\begin{equation*}
    \alpha \vec{\eta}^T \bar{C}(x) \vec{\eta} \leq \vec{\eta}^T C(\vec{\sigma},x) \vec{\eta} \leq \beta \vec{\eta}^T \bar{C}(x) \vec{\eta} 
\end{equation*}
for all $\vec{\eta} \in \mathbb{R}^d$ and for a.e.~$x\in \Omega$. In addition, let $R$ be the Cholesky factor of $\bar{K}$ so that $\bar{K}=R R^T$. We can now write the linear system as
\begin{equation*}
    R^{-1} K (\vec{\sigma})R^{-T} R^{T} \vec{\hat{x}}(\vec{\sigma}) = R^{-1}\vec{\hat{b}} .
\end{equation*}
Redefining $R^{-1}K(\vec{\sigma})R^{-T}=A(\vec{\sigma})$, $\vec{x}(\vec{\sigma})=R^{T}\vec{\hat{x}}(\vec{\sigma})$, and $\vec{b}=R^{-1}\vec{\hat{b}}$ yields the preconditioned linear system: find $\vec{x}(\vec{\sigma})\in \mathbb{R}^n$ satisfying
\begin{equation}
    A(\vec{\sigma})\vec{x}(\vec{\sigma})=\vec{b}.
    \label{eq:lineareq}
\end{equation}
The assumption \eqref{eq:cmatrix} ensures that the matrix $A$ is a linear matrix-valued function. This is, we have arrived to an instance of \eqref{eq:linsys}. Next, we give an estimate for the condition number of the coefficient matrix $A(\vec{\sigma})$ in~\eqref{eq:lineareq}.
\begin{lemma}
\label{lemma:cmatrix}
Let $C:\mathbb{R}^s \times \mathbb{R}^d \rightarrow \mathbb{R}^{d\times d}$ and $\bar{C}: \mathbb{R}^d \rightarrow \mathbb{R}^{d\times d}$. Let $K:\mathbb{R}^s \mapsto \mathbb{R}^{n\times n}$ be as defined in \eqref{eq:kmatrix}, $\bar{K} \in \mathbb{R}^{n\times n}$ as defined in \eqref{eq:kbar}, $R$ the Cholesky factor of $\bar{K}$, and $A(\vec{\sigma})=R^{-1}K(\vec{\sigma})R^{-T}$. Assume that there exists $\alpha,\beta\in \mathbb{R}^+$ such that
\begin{equation}
    \label{eq:cmatrix_ass}
    \alpha \vec{\eta}^T \bar{C}(x)\vec{\eta} \leq \vec{\eta}^T C(\vec{\sigma},x) \vec{\eta} \leq \beta \vec{\eta}^T \bar{C}(x) \vec{\eta} 
\end{equation}
for a.e. $x \in \Omega$ and every $\vec{\eta} \in \mathbb{R}^s$. Then it holds for any $\vec{\sigma} \in S$ that $\kappa_2(A(\vec{\sigma})) \leq \beta/\alpha$, where $\kappa_2(A(\vec{\sigma}))$ is the condition number of $A(\vec{\sigma})$ in the Euclidean norm.
\end{lemma}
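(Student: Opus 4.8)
The plan is to translate the matrix inequality $\kappa_2(A(\vec{\sigma})) \le \beta/\alpha$ into a statement about Rayleigh quotients, and then pull the hypothesis \eqref{eq:cmatrix_ass} through the finite element assembly and the preconditioning change of variables. Since $A(\vec{\sigma}) = R^{-1} K(\vec{\sigma}) R^{-T}$ is symmetric positive definite for $\vec{\sigma} \in S$, its condition number is $\lambda_{\max}/\lambda_{\min}$, and it suffices to show that for every nonzero $\vec{y} \in \mathbb{R}^n$,
\begin{equation*}
\alpha \le \frac{\vec{y}^T A(\vec{\sigma}) \vec{y}}{\vec{y}^T \vec{y}} \le \beta.
\end{equation*}

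First I would substitute $\vec{y} = R^T \vec{z}$ (a bijection on $\mathbb{R}^n$ since $R$ is invertible), so that $\vec{y}^T A(\vec{\sigma}) \vec{y} = \vec{z}^T K(\vec{\sigma}) \vec{z}$ and $\vec{y}^T \vec{y} = \vec{z}^T R R^T \vec{z} = \vec{z}^T \bar{K} \vec{z}$. Hence the claim reduces to the generalized eigenvalue bound
\begin{equation*}
\alpha\, \vec{z}^T \bar{K} \vec{z} \le \vec{z}^T K(\vec{\sigma}) \vec{z} \le \beta\, \vec{z}^T \bar{K} \vec{z} \quad \text{for all } \vec{z} \in \mathbb{R}^n.
\end{equation*}
Next I would identify $\vec{z}$ with the coefficient vector of a finite element function $v_h = \sum_i z_i \phi_i \in V_{FE}$ and use the definitions \eqref{eq:kmatrix} and \eqref{eq:kbar} to write $\vec{z}^T K(\vec{\sigma}) \vec{z} = \int_\Omega \nabla v_h \cdot C(\vec{\sigma},x) \nabla v_h$ and $\vec{z}^T \bar{K} \vec{z} = \int_\Omega \nabla v_h \cdot \bar{C}(x) \nabla v_h$. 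Applying the pointwise hypothesis \eqref{eq:cmatrix_ass} with $\vec{\eta} = \nabla v_h(x)$ for a.e.\ $x$ and integrating over $\Omega$ gives exactly the desired two-sided bound. Dividing through by $\vec{z}^T \bar{K} \vec{z} > 0$ (positive since $\bar{K}$ is s.p.d., as $\bar{C} \ge \alpha^{-1} C(\vec\sigma,\cdot)/\ldots$ — more simply, $\bar C$ is pointwise positive definite by \eqref{eq:cmatrix_ass}) yields $\alpha \le \lambda_{\min}(A(\vec\sigma))$ and $\lambda_{\max}(A(\vec\sigma)) \le \beta$, whence $\kappa_2(A(\vec\sigma)) \le \beta/\alpha$.

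There is no serious obstacle here; the argument is essentially bookkeeping. The one point requiring a little care is the dimensional mismatch in the statement of \eqref{eq:cmatrix_ass}: as written the test vector is $\vec{\eta} \in \mathbb{R}^s$, but for the assembly argument we need the inequality for $\vec{\eta} = \nabla v_h(x) \in \mathbb{R}^d$, matching the earlier (correct) statement in \eqref{eq:cmatrix_ass}'s first appearance before Lemma~\ref{lemma:cmatrix}. I would simply read the hypothesis as holding for all $\vec{\eta} \in \mathbb{R}^d$, which is the only interpretation consistent with $C(\vec{\sigma},x), \bar{C}(x) \in \mathbb{R}^{d\times d}$. With that reading, the proof is three lines: change of variables, FE assembly identity, pointwise-to-integral.
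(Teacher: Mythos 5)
Your proposal is correct and follows essentially the same route as the paper's own proof: characterise the extreme eigenvalues of $A(\vec{\sigma})$ via the Rayleigh quotient, apply the change of variables $\vec{y}=R^{T}\vec{z}$ to obtain the generalised quotient $\vec{z}^{T}K(\vec{\sigma})\vec{z}/\vec{z}^{T}\bar{K}\vec{z}$, identify $\vec{z}$ with a finite element function, and pass the pointwise bound \eqref{eq:cmatrix_ass} through the integrals. Your remark that $\vec{\eta}\in\mathbb{R}^{s}$ in the hypothesis should read $\vec{\eta}\in\mathbb{R}^{d}$ is also right; this is a typo in the lemma statement and the paper's proof implicitly uses the $\mathbb{R}^{d}$ version.
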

\begin{proof}
We use the \textit{Rayleigh quotient}. According to it the smallest and largest eigenvalues of the matrix $A(\vec{\sigma})$ are
\begin{equation}
    \lambda_{\min} =
    \min_{\vec{y} \in \mathbb{R}^n} \frac{\vec{y}^T R^{-1}K(\vec{\sigma})R^{-T}\vec{y}}{\vec{y}^T \vec{y}}, \quad  \lambda_{\max} =
    \max_{\vec{y} \in \mathbb{R}^n} \frac{\vec{y}^T R^{-1}K(\vec{\sigma})R^{-T}\vec{y}}{\vec{y}^T \vec{y}}.
    \label{eq:lambdaminmax}
\end{equation}
We apply a change of variables $\vec{\hat{y}}=R^{-T}\vec{y}$ and write the quotient as
\begin{equation*}
    \frac{\vec{\hat{y}}^T K(\vec{\sigma})\vec{\hat{y}}}{\vec{\hat{y}}^T \bar{K} \vec{\hat{y}}}.
\end{equation*}
Let $v_{\vec{\hat{y}}} \in V_{FE}$ be defined as $v_{\vec{\hat{y}}} = \sum_{i=1}^n \hat{y}_i \phi_i$ for $\vec{\hat{y}} \in \mathbb{R}^n$. By \eqref{eq:kmatrix}, the quotient can be written as:
\begin{equation*}
    \frac{\int \nabla v_{\vec{\hat{y}}} \cdot C(\vec{\sigma},x) \nabla v_{\vec{\hat{y}}}}{\int \nabla v_{\vec{\hat{y}}} \cdot \bar{C}(x) \nabla v_{\vec{\hat{y}}}}.
\end{equation*}
Using the assumptions in \eqref{eq:cmatrix_ass} we find that:
\begin{align*}
    \alpha \leq \frac{\int \nabla v_{\vec{\hat{y}}} \cdot C(\vec{\sigma},x) \nabla v_{\vec{\hat{y}}}}{\int \nabla v_{\vec{\hat{y}}} \cdot \bar{C}(x) \nabla v_{\vec{\hat{y}}}} \leq \beta \quad \mbox{for any $\vec{\hat{y}} \in \mathbb{R}^n$.}
\end{align*}
Combining this with \eqref{eq:lambdaminmax} yields estimate for the smallest and the largest eigenvalue of $A(\vec{\sigma})$
\begin{equation*}
    \alpha \leq \lambda_{\min}(A(\vec{\sigma})) \quad \mbox{and} \quad \lambda_{\max}(A(\vec{\sigma})) \leq \beta
\end{equation*}
for any $\vec{\sigma} \in S$. Recalling the definition of the condition number completes the proof.
\end{proof}

\subsubsection{Example 1: Piecewise constant material parameter}
\label{sec:matparam}
Let the subdomains $\{\Omega_i\}_{i=1}^s$ be non-overlapping and satisfy $\bigcup_{i=1}^s \overline{ \Omega_i} = \overline{\Omega}$. Let $a\in \mathbb{R}$, $a>1$ and $S = \{ \vec{\sigma} \in \mathbb{R}^s \; | \; \sigma_i \in [1,a] \quad \mbox{for $i \in \{1,\ldots,s\}$} \; \}$. We consider solving the problem: find $u_\sigma \in H^1_0(\Omega)$ satisfying
\begin{equation}
\label{eq:cond_eq}
    \sum_i \int_{\Omega_i} \sigma_i \nabla u_\sigma \cdot \nabla v = \int_{\Omega} f v \quad \text{for all } v\in H_0^1(\Omega)
\end{equation}
for multiple $\vec{\sigma} \in S$. The parameter $\sigma_i$ may physically correspond, for example, to the electrical conductivity in the subdomain $\Omega_i$. Eq. \eqref{eq:cond_eq} is an instance of the abstract problem in \eqref{eq:weakFE} with
\begin{equation}
\label{eq:ex1_C} C(\vec{\sigma},x) = \sum_{i=1}^s \sigma_i I \mathcal{X}_{\Omega_i},
\end{equation}
where $\mathcal{X}_{\omega}:\mathbb{R}^d \mapsto \{0,1\}$ is the characteristic function of the set $\omega \subset \mathbb{R}^d$.
\begin{figure}[h]
    \centering
    \includegraphics[width=0.5\textwidth]{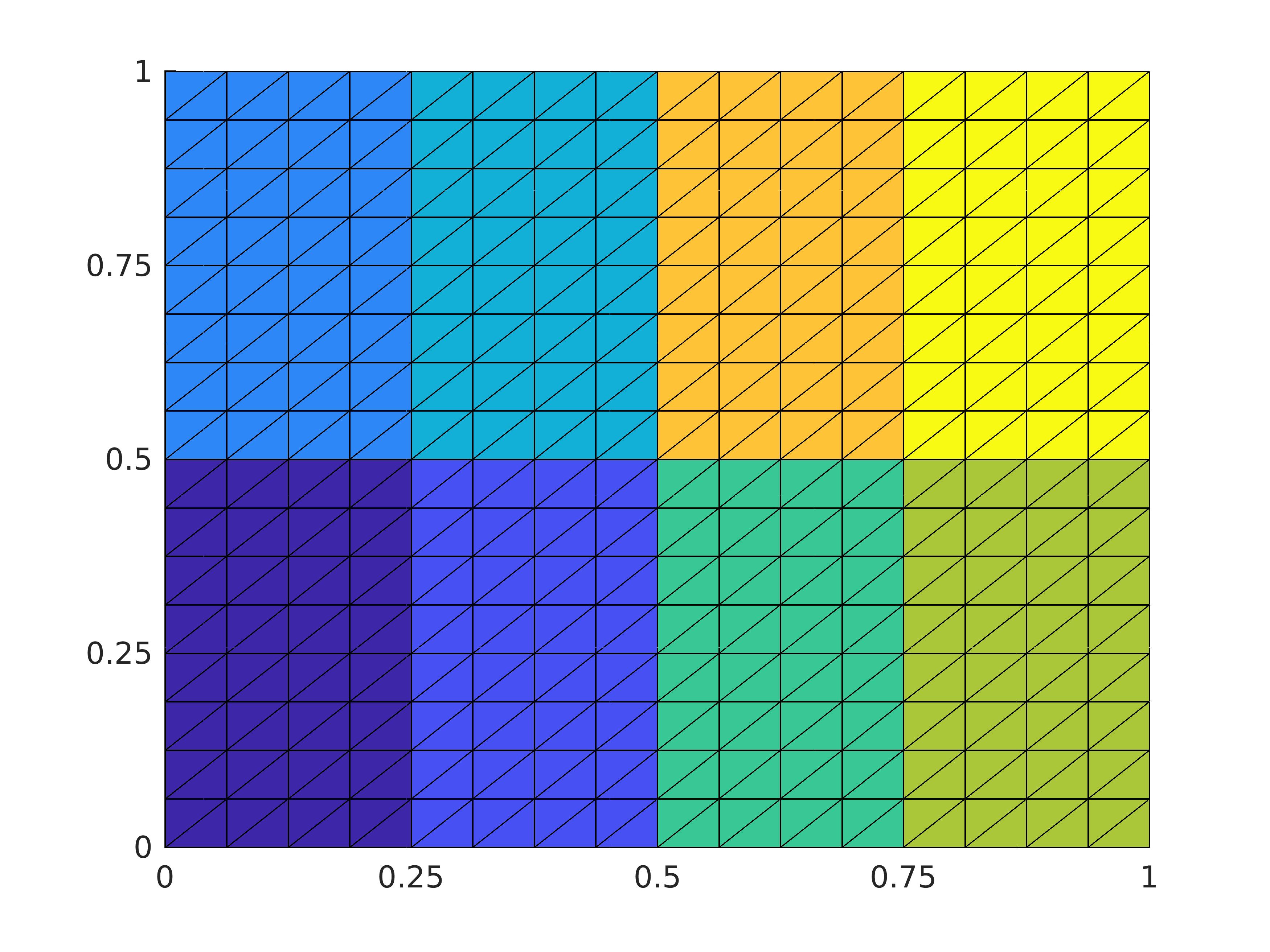}
    \caption{Example of a $2\times 4$ - checkerboard pattern. The subdomains $\{\Omega_i\}_{i=1}^8$, marked with different colors, have independent material parameters. The depicted FE-mesh conforms with the subdomain interfaces.}
    \label{fig:checkerboard}
\end{figure}
As a demonstration we use $N\times M$ - checkerboard patterns where the domain $(0,1)$ is divided into $NM$ subdomains by slicing it into $N$ horizontal and $M$ vertical strips, see Fig. \ref{fig:checkerboard}.  We choose the preconditioning coefficient matrix $\bar{C}=I$. Estimate for the condition number of $A:\mathbb{R}^s\mapsto\mathbb{R}^{n\times n}$ corresponding to \eqref{eq:cond_eq} and $\overline{C}=I$ follows using Lemma \ref{lemma:cmatrix}. As $\sigma_i \in [1,a]$, we have 
\begin{equation*}
    \vec{\eta}^T \begin{pmatrix} 1&0\\0&1 \end{pmatrix} \vec{\eta}
    \leq \vec{\eta}^T C(\vec{\sigma},x) \vec{\eta} 
    \leq \vec{\eta}^T \begin{pmatrix} a&0\\0&a \end{pmatrix} \vec{\eta}
    \quad \mbox{for a.e. $x \in \Omega$ and any $\eta \in \mathbb{R}^d$}.
\end{equation*}
%\begin{equation*}
%    \vec{\eta}^T C(\vec{\sigma},x)\vec{\eta} \leq  = a \vec{\eta}^T \vec{\eta}.
%\end{equation*}
%
Therefore $\alpha=1$ and $\beta=a$ in \eqref{eq:cmatrix_ass}, and $\kappa_2(A(\vec{\sigma})) \leq a$.

\subsubsection{Example 2: Deformation of geometry}
\label{sec:hole}
A slightly more complicated example of \eqref{eq:linsys} is a problem where the parameter $\vec{\sigma}$ is related to deformation of the domain. As an example we solve the Poisson's equation in a rectangular domain with a spherical hole in multiple positions along the y-direction. Let $r = 0.15$, $\vec{r}_0 = \begin{bmatrix} 0.5 & 0.5 \end{bmatrix}^T$, and $\Omega_l = (0,1)^2 \setminus B(\vec{r}_0 + \vec{e}_2 l, r)$. Consider solving the problem: find $u_l \in H^1_0(\Omega_l)$ satisfying
\begin{equation}
    \int_{\Omega_l} \nabla u_l \cdot \nabla v_l = \int_{\Omega_l} v_l \quad \text{for all } v_l\in H_0^1(\Omega_l)
    \label{eq:poissonweak}
\end{equation}
for multiple $l\in (-a,a)$, where $a\in \mathbb{R}^+$, $a < 1/3$. We reduced the problem~\eqref{eq:poissonweak} to the reference domain $\widehat{\Omega} = (0,1)^2\setminus B(\vec{r}_0, r)$ by using the coordinate transformation $F_l: \widehat{\Omega} \rightarrow \Omega_l$ defined as
\begin{equation}
    F_l(\hat{\vec{x}}) = \hat{\vec{x}} + \vec{e}_2 \tau(\hat{x}_2)l .
    \label{eq:tranformation}
\end{equation}
Here $l$ specifies the length of the translation and $\tau$ is a piecewise linear function defined as follows:
\begin{equation*}
    \tau(t) = 
    \begin{cases}3t & t\in [0, \frac{1}{3})\\
    1 & t\in[\frac{1}{3},\frac{2}{3}]\\
    -3t+3 & t\in(\frac{2}{3},1]\end{cases}.
\end{equation*}
\begin{figure}[h]
    \centering
    \includegraphics[width=0.9\textwidth]{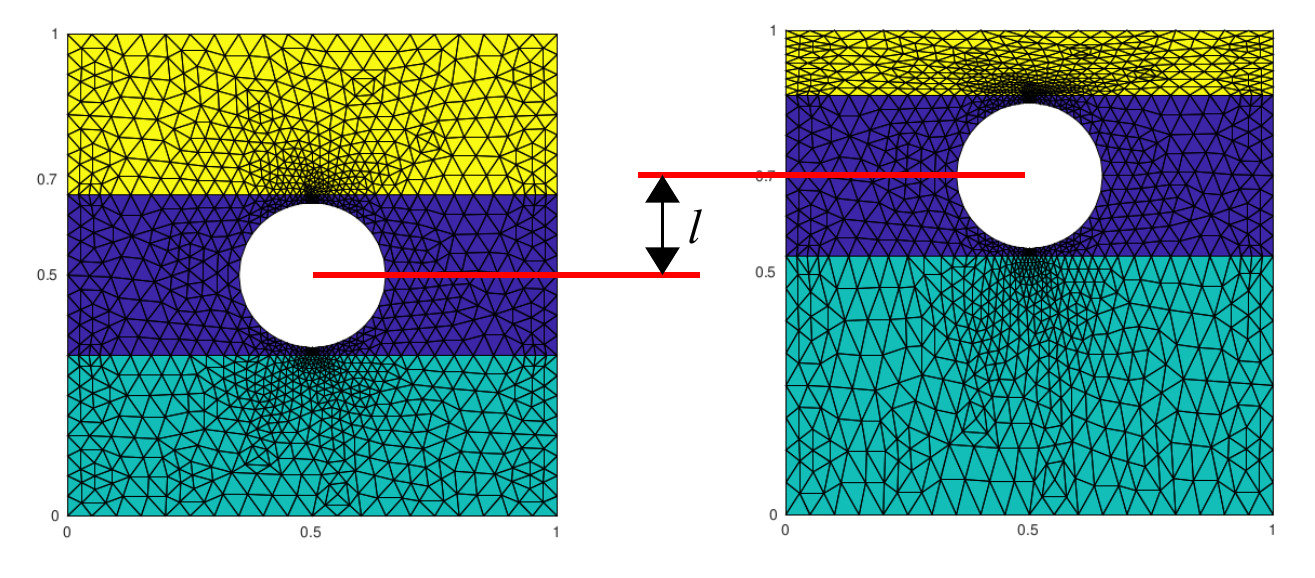}
    \caption{The geometry of the reference domain $\widehat{\Omega}$ (left) and the transformed domain $\Omega_l$ (right). Here $l=0.2$.}
    \label{fig:rect_transf}
\end{figure}
The domain before and after the transformation can be seen in Figure \ref{fig:rect_transf}. For convenience, we name the three subdomains with different transformation rules from bottom to top as $\widehat{\Omega}_{1}$, $\widehat{\Omega}_{2}$ and $\widehat{\Omega}_{3}$. The Jacobian of the transformation is 
\begin{equation*}
    D F_l(\hat{\vec{x}}) = \begin{pmatrix}1&0\\0&1+\tau'(\hat{x}_2)l\end{pmatrix} 
\end{equation*}
Applying the change of variables $\vec{x} = F_l(\hat{\vec{x}})$ in~\eqref{eq:poissonweak} and defining $\hat{u}_l(\hat{\vec{x}}):=u_l(F_l(\hat{\vec{x}}))$ yields the problem: find $\hat{u}_l \in H^1_0(\widehat{\Omega})$ satisfying 
\begin{equation}
\label{eq:ref_hole_prob}
    \int_{\widehat{\Omega}} \hat{\nabla} \hat{u}_l \cdot (D F_l)^{-1} (D F_l)^{-T} \det(D F_l)  \hat{\nabla} \hat{v} =
    \int_{\widehat{\Omega}} \hat{v} \det{(D F_l)} 
\end{equation}
for all $\hat{\vec{v}}_l \in H^1_0(\widehat{\Omega})$. Observe that $\det{DF_l}>0$ for $l\in(-a,a)$, hence, its absolute value can be omitted. Expanding the left hand side of \eqref{eq:ref_hole_prob} gives
\begin{equation}
\int_{\widehat{\Omega}}  \hat{v} \det{(D F_l)} = 
\int_{\widehat{\Omega}}  \hat{v} + 
l  \int_{\widehat{\Omega}_{i}}  f \hat{v} \quad \mbox{for} \quad f(\hat{\vec{x}}) = \tau^\prime(\hat{x}_2).  
\end{equation}
Hence, the problem \eqref{eq:ref_hole_prob} can be solved in two parts: find $\hat{u}_{l0}, \hat{u}_{l1} \in H^1_0(\widehat{\Omega})$  satisfying:
\begin{equation}
\label{eq:ex2_1}
    \int_{\widehat{\Omega}} \hat{\nabla} \hat{u}_{l0}\ \cdot (D F_l)^{-1} (D F_l)^{-T} \det(D F_l)  \hat{\nabla} \hat{v} =
    \int_{\widehat{\Omega}}  \hat{v} \det{(D F_l)}
\end{equation}
and 
\begin{equation}
\label{eq:ex2_2}
    \int_{\widehat{\Omega}} \hat{\nabla} \hat{u}_{l1} \cdot (D F_l)^{-1} (D F_l)^{-T} \det(D F_l)  \hat{\nabla} \hat{v} =
    \int_{\widehat{\Omega}} f \hat{v} \det{(D F_l)},
\end{equation}
for any $v \in H^1_0(\widehat{\Omega})$. Then $\hat{u}_l = \hat{u}_{l0} + l \hat{u}_{l1}$. Next, we reformulate the RHS so that both of these problems are instances of the abstract problem \eqref{eq:weakform}. The extra term induced by the change of variables to the RHS of \eqref{eq:ref_hole_prob} is:
\begin{equation*}
    (D F_l)^{-1} (D F_l)^{-T} \det(D F_l) = \begin{pmatrix}1+\tau'(\hat{x}_2)l&0\\0&(1+\tau'(\hat{x}_2)l)^{-1} \end{pmatrix} .
\end{equation*}
Since $\tau$ is a piecewise linear function, the above matrix is piecewise constant with respect to the spatial variable $x$ and depends only on $l$. Explicitly,
\begin{equation}
    \begin{pmatrix}1+3l&0\\0&(1+3l)^{-1} \end{pmatrix} \mathcal{X}_{\widehat{\Omega}_1}+
    \begin{pmatrix}1&0\\0&1 \end{pmatrix} \mathcal{X}_{\widehat{\Omega}_2}+
    \begin{pmatrix}1-3l&0\\0&(1-3l)^{-1} \end{pmatrix} \mathcal{X}_{\widehat{\Omega}_3}
    \label{eq:transmatrix}.
\end{equation}
We identify the parameters $\vec{\sigma} \in \mathbb{R}^s$ with elements in the above equation as
\begin{equation}
\label{eq:l2sigma}
\sigma_1 = 1+3l,\quad \sigma_2 = (1+3l)^{-1}, \quad \sigma_5 = 1-3l,\quad \sigma_6 = (1-3l)^{-1}.
\end{equation}
The elements $\sigma_3$ and $\sigma_4$ equal to one since the Jacobian in the subdomain $\widehat{\Omega}_{2}$ is the identity. The above relations and bound $l\in(-a,a)$ define the parameter set $S\subset \mathbb{R}^6$. The LHS of \eqref{eq:ex2_1} corresponds to 
\begin{equation}
\label{eq:ex2_final}
\int_{\widehat{\Omega}} \nabla \hat{u}_{l0} \cdot C(\vec{\sigma},x)\nabla \hat{v}
\end{equation}
for 
\begin{equation} C(\vec{\sigma},x) = \begin{pmatrix} \sigma_1 & 0 \\ 0 & \sigma_2 \end{pmatrix}\mathcal{X}_{\widehat{\Omega}_{1}}(x) 
    + I \mathcal{X}_{\widehat{\Omega}_{2}}(x) 
    + \begin{pmatrix} \sigma_5 & 0 \\ 0 & \sigma_6 \end{pmatrix}\mathcal{X}_{\widehat{\Omega}_{3}}(x).
\end{equation}
Hence, it is an instance of the abstract problem \eqref{eq:weakFE}. Same applies to \eqref{eq:ex2_2}.

 Because the parameter $l$, and also $\vec{\sigma}$, vary symmetrically around $0$, we choose the preconditioning coefficient matrix $\bar{C}=I$. We proceed to estimate the condition number of $A:\mathbb{R}^s \mapsto \mathbb{R}^{n\times n}$ corresponding to \eqref{eq:ex2_final} and $\overline{C}=I$. We obtain:
% %
% \begin{equation*}
%     \vec{\eta}^T C(\vec{\sigma},x) \vec{\eta} = \vec{\eta}^T \begin{pmatrix}
%     1+\tau'(x_2)l & 0\\ 0 & (1+\tau'(x_2)l)^{-1}
%     \end{pmatrix} \vec{\eta}
% \end{equation*}
% %
% The extremal values for the derivative $\tau'$ are $-3$ and $3$ and $l\in (-a,a)$, hence
%
\begin{equation*}
(1-3a) \vec{\eta}^T\vec{\eta} \leq \vec{\eta}^T C(\vec{\sigma},x) \vec{\eta}  \leq \frac{1}{1-3a} \vec{\eta}^T\vec{\eta} \quad \mbox{for a.e. $\vec{x}\in \Omega$ and $\vec{\eta} \in \mathbb{R}^d$}. 
\end{equation*}
This is, $\alpha = 1-3a$ and $\beta = (1-3a)^{-1}$ in Eq. \eqref{eq:cmatrix_ass}. By Lemma~\ref{lemma:cmatrix} the condition number satisfies  
\begin{equation}
    \kappa_2(A(\vec{\sigma})) \leq 1/(1-3a)^2 \quad \mbox{for all $\vec{\sigma} \in S$}.
    \label{eq:hole_kappa}
\end{equation}
It's worth noting that the condition number blows up when $a$ approaches $1/3$.
\subsection{Subspace methods} 

%\begin{equation}
%    \bhat \coloneqq \bar{A}^{-1}b
%\end{equation}

%\begin{equation}
%%    Q = [\bhat, x\bhat, x^2\bhat, ..., x^N\bhat]
%    \label{eq:krylov}
%\end{equation}

Let $B\in \mathbb{S}^{n}_{++}$, $\vec{g} \in \mathbb{R}^n$, $V \subset \mathbb{R}^n$ be a subspace, $\{ \vec{q}_i \}_{i=1}^k$ a basis of $V$,  and $Q = \begin{bmatrix} \vec{q}_1 & \ldots & \vec{q}_k \end{bmatrix}$. A subspace method computes an approximate solution $\hat{\vec{y}} \in V$ to the linear system $B \vec{y} = \vec{g}$ by first solving the auxiliary problem: find $\vec{z} \in \mathbb{R}^k$ satisfying 
\begin{equation}
\label{eq:sub_lin_sys}
    Q^T B Q \vec{z} = Q^T \vec{g}, \quad \mbox{and then setting} \quad \hat{\vec{y}} = Q \vec{z}.
\end{equation}
We call such $\hat{\vec{y}}$ as the \emph{subspace solution from $V$}.

Any $B\in \mathbb{S}^{n}_{++}$ defines an inner product $\left<\cdot,\cdot\right>_{B}$ and the induced norm $\| \cdot \|_{B}$ in $\mathbb{R}^n$: for any $\vec{z},\vec{w} \in \mathbb{R}^n$ let
\begin{equation*}
\left< \vec{z},\vec{w} \right>_{B} := \vec{z}^T B \vec{w} \quad \mbox{and} \quad \| \vec{z} \|_{B} := \left< \vec{z},\vec{z} \right>_{B}^{1/2}.
\end{equation*}
 The subspace solution $\vec{\hat{y}}$ from $V$ is the $\left<\cdot,\cdot\right>_{B}$-orthogonal projection of $\vec{y}$ onto $V$. Thus $\vec{\hat{y}}$ depends only on $V$, not on the basis $\{ \vec{q}_i \}_{i=1}^k$. For this reason, we call $V$ as \emph{the method subspace}.
 
 The error of the subspace solution $\hat{\vec{y}}$ is measured in the $B$-norm as $\| \vec{y}-\vec{\hat{y}} \|_{B}$. Because $\vec{\hat{y}}$ is the $B$-orthogonal projection of the exact solution to $V$, the $B$-norm of the error satisfies the \emph{best approximation property}:
\begin{equation}
\label{eq:proj_error}
\| \vec{y}-\vec{\hat{y}} \|_{B} = \min_{\vec{v} \in V} \| \vec{y} - \vec{v} \|_{B}. 
\end{equation}

Let $\vec{x}(\vec{\sigma})$ be the exact solution and $\vec{\hat{x}}(\vec{\sigma})$ the subspace solution from $V$ to \eqref{eq:linsys}, respectively. Our aim is to design subspace $V$, independent of $\vec{\sigma}$, such that 
\begin{equation*}
    \| \hat{\vec{x}}(\vec{\sigma}) - \vec{x}(\vec{\sigma}) \|_{A(\vec{\sigma})} \leq tol \quad \mbox{for any $\vec{\sigma} \in S$}.
\end{equation*}
We take advantage of the best approximation property and design compound Krylov method subspaces that contain either exactly or approximately a solution candidate appearing in the CG error analysis. By the best approximation property, the error of the CK-solution is then bounded by the error of this solution candidate. CG error analysis is discussed next.
\subsection{The Conjugate Gradient Method} 
\label{sec:cg}
The CG method is an iteration for finding a sequence of approximate solutions to linear systems with s.p.d. coefficient matrices, see \cite{HeSt:1953} and \cite{axelsson:1994}.  It can be understood as a line search method for minimising the energy functional associated to the linear system to be solved, or as a method finding a sequence of subspace solutions from the family of Krylov subspaces corresponding to the linear system. 

Let $B\in \mathbb{S}^{n}_{++}$, $\vec{g} \in \mathbb{R}^n$, and consider the linear system: find $\vec{y} \in \mathbb{R}^n$ satisfying 
\begin{equation}
    \label{eq:Blinsys}
    B \vec{y} = \vec{g}.
\end{equation} 
The family of Krylov subspaces corresponding to \eqref{eq:Blinsys} is defined as 
\begin{equation*}
     K_j\left(B ,\vec{g}\right) :=  \mathop{span}\{ \vec{g}, B \vec{g}, \ldots, B^{j-1} \vec{g}\} \quad \mbox{for $j \in \mathbb{N}$}.
     \label{eq:krylov}
\end{equation*}
The CG method computes a sequence of approximate solutions $\{\hat{\vec{y}}_j \}$ to \eqref{eq:Blinsys} such that $\vec{\hat{y}}_j$ is the subspace solution from $K_j(B,\vec{g})$ for each $j \in \mathbb{N}$. It does this \emph{without} computing a basis for $K_j(B,\vec{g})$, which makes CG method very memory efficient. We proceed to outline the CG error analysis, i.e., study how the error $\vec{y}-\hat{\vec{y}}_j$ depends on $B$, $\vec{g}$, and the iteration index $j$. This material can be found, e.g., from \cite{axelsson:1994}. 

First, observe the duality between vectors in $K_j(B,\vec{g})$ and $(j-1)$-degree polynomials: each $\vec{v}_j \in K_j(B, \vec{g})$ satisfies 
\begin{equation}
\label{eq:krylov_rep}
\vec{v}_j = p_{\vec{v}_j}(B) \vec{g} \quad \mbox{for} \quad p_{\vec{v}_j} \in \mathcal{P}^{j-1}.
\end{equation}
Similarly, $p(B)\vec{g} \in K_j(B,\vec{g})$ for any $p\in \mathcal{P}^{j-1}$. 

It is well known that a bound for the error norm $\| \vec{y}-\vec{\hat{y}}_j \|_{B}$ follows from the best approximation property \eqref{eq:proj_error} and constructing an approximation to $\vec{y}$ from $K_{j}(B,\vec{g})$ by utilising properties of
the Chebychev polynomials, see \cite{axelsson:1994}. By~\eqref{eq:krylov_rep}
\begin{equation*}
\vec{y} - \vec{v}_j = \left[ I - p_{\vec{v}_j}(B) B \right] \vec{y} 
\end{equation*}
for any $\vec{v}_j \in K_j(B,\vec{g})$ and $p_{\vec{v}_j} \in \mathcal{P}^{j-1}$ satisfying $\vec{v}_j = p_{\vec{v}_j}(B)\vec{g}$. Denote the set of eigenvalues of $B$ by $\Lambda(B)$. By standard arguments, 
\begin{equation}
\label{eq:cg_error1}
\| \vec{y} - \vec{v}_j \|_{B} \leq \max_{t \in \Lambda(B) } |1-t p_{\vec{v}_j}(t)| \| \vec{y} \|_{B} \quad \mbox{for any $\vec{v}_j \in K_j(B,\vec{g})$}.  
\end{equation}

The multiplier $q_{\vec{v}_j}(t) := 1-t p_{\vec{v}_j}(t)$ in \eqref{eq:cg_error1} satisfies $q_{\vec{v}_j} \in \mathcal{M}^j$, where $\mathcal{M}^j$ is the space of degree $j$ monic polynomials. One can verify that choosing appropriate $\vec{v}_j$ yields all possible multipliers in $\mathcal{M}^j$. The CG error estimate could be constructed by finding $q_{\vec{v}_j} \in \mathcal{M}^j$ that minimises the multiplicative term $\max_{t \in \Lambda(B) } |q_{\vec{v}_j}(t)|$. As there does not exist a general solution for this optimisation problem, one instead finds $q_{\vec{v}_j}^*$ that has the minimal $L^\infty(\lambda_{\min}(B),\lambda_{\max}(B))$ norm in the set $\mathcal{M}^j$ by translating and scaling the $j$th Chebychev polynomial $T_j(t)$ on $(-1,1)$ as 
\begin{equation}
\label{eq:cg_cheb}
q_{j}^*(t) = \frac{T_j(\frac{\lambda_{\max}+\lambda_{\min}-2t}{\lambda_{\max}-\lambda_{\min}})}{T_j(\frac{\lambda_{\max}+\lambda_{\min}}{\lambda_{\max}-\lambda_{\min}})} = \sum_{k=0}^j \gamma_{jk} t^k.
\end{equation}
Note that $\gamma_{j0} = q_{j}^*(0) = 1$ for any $j\in \mathbb{N}$. Let $\kappa(B)$ be the condition number of $B$, i.e., $\kappa(B)=\| B\|_2 \|B^{-1} \|_2$. Using the properties of Chebychev polynomials gives the identity
\begin{equation}
    \label{eq:chebest}
  \max_{t \in \Lambda(B) } |1-t p_{\vec{v}_j}(t)| =   2\Bigg{(}\frac{\sqrt{\kappa(B)}-1}{\sqrt{\kappa(B)}+1}\Bigg{)}^j.
\end{equation}
The element $\vec{v}^*_j \in K_j(B,\vec{g})$ satisfying $q_j^*(t) = 1-t p_{\vec{v}_j^*}(t)$ is
\begin{equation}
\label{eq:vstar}
\vec{v}^*_j = (q_{j}^*(B)-I)B^{-1} \vec{g} = \sum_{k=1}^j \gamma_{jk} B^{k-1} \vec{g} .
\end{equation}
Choosing $\vec{v}_j = \vec{v}_j^*$ in \eqref{eq:cg_error1}, using the best approximation property, and \eqref{eq:chebest} yields the error bound 
\begin{equation}
  \| \vec{y}-\vec{\hat{y}}_j \|_{B} \leq 2\Bigg{(}\frac{\sqrt{\kappa(B)}-1}{\sqrt{\kappa(B)}+1}\Bigg{)}^j \| \vec{y} \|_{B} \quad \mbox{for any $j\in \mathbb{N}$}.
     \label{eq:kryloverror} 
 \end{equation}
\section{Compound Krylov Subspaces}
\label{sec:pKrylov}
In this section, we define three families of compound Krylov subspaces that are used to solve \eqref{eq:linsys}. We begin by defining the family $\{ CK_{j} \} \subset \mathbb{R}^n$, $CK_{j} = CK_{j}(A,\vec{b})$ that satisfies  
\begin{equation}
\label{eq:exact_cond}
K_j(A(\vec{\sigma}),\vec{b}) \subset CK_{j}(A,\vec{b}) \quad \mbox{for any $\vec{\sigma} \in S$ and $j\in \mathbb{N}$}. 
\end{equation} 
Due to the inclusion in \eqref{eq:exact_cond} and the best approximation property \eqref{eq:proj_error}, the subspace solution to \eqref{eq:linsys} from $CK_j$ is at least as accurate as the $j$th CG-iterate for any $\vec{\sigma} \in S$. 

The subspace satisfying \eqref{eq:exact_cond} that has the smallest possible dimension is 
\begin{equation}
\label{eq:apu_set}
    \bigcup_{k=0}^{j-1} \mathop{span} \{ \; A(\vec{\sigma})^{k} \vec{b} \; | \; \vec{\sigma} \in S \; \}.
\end{equation}
We take advantage of linearity of $A$, and define the $CK$ subspace \emph{containing} \eqref{eq:apu_set} by linearisation: the terms $A(\vec{\sigma})^k \vec{b}$ are written as $A(\vec{\sigma})^k\vec{b} = L_k \vec{\sigma} \otimes \cdots \otimes \vec{\sigma}$, where $L_k$ is the $k$th linearisation matrix and the Kronecker product is repeated $k$-times, see Section~\ref{sec:exact}. By definition, it holds that $\mathop{span} \{ \; A(\vec{\sigma})^k \vec{b} \; | \; \sigma \in S \; \} \subset \mathop{range} (L_k)$. Hence, we  define
\begin{equation}
\label{eq:CK_apu}
    CK_j(A,\vec{b}) := \bigcup_{k=0}^{j-1} \mathop{range} (L_{k}). 
\end{equation}
The column dimension of $L_k$ depends exponentially on $k$ whereas its row dimension is fixed. Thus, we compute $\mathop{range}(L_k)$ using the normal form $L_k L_k^T \in \mathbb{R}^{n\times n}$ that can be formed \emph{without} ever constructing $L_k$, see Lemma~\ref{lemma:normal_form} and Remark~\ref{remark:normalform}. 

The computational cost of a subspace method depends on the dimension of the applied method subspace. To keep both small, we propose two families of \emph{approximate compound Krylov subspaces}, denoted by $\{ CK_{j}^1 \}$ and $\{ CK_{j}^2 \}$, that have smaller dimensions but admit similar error estimate as $\{CK_{j}\}$. Spaces $CK^{1}_{j}$ are obtained by using the span of the most dominant right singular vectors of the linearisation matrix $L_k$ instead of $\mathop{range}(L_k)$ in \eqref{eq:CK_apu}. The space $CK_{j}^2$ is obtained by applying an approximate linearisation process including a low-rank approximation step. In both cases, the low-rank approximation can be implemented in a way that eliminates the exponential growth in the dimension of all involved matrices for a favorable $A$, see Lemma~\ref{lemma:normal_form}, Theorem~\ref{thm:C2L}, and numerical examples in Section~\ref{sec:num}.
\subsection{Linearisation process}
\label{sec:exact}
Next, we discuss the linearisation process and define the family of exact-CK subspaces satisfying \eqref{eq:CK_apu}. We begin with some notation. 

\begin{definition} \label{def:k_kron} Let $k\in \mathbb{N}$ and $\vec{\sigma} \in \mathbb{R}^s$. We write $\vec{\sigma}^{\otimes k}$ for the $k$-times Kronecker product of $\vec{\sigma}$,
\begin{equation*}
\label{eq:k_kron}
    \vec{\sigma}^{\otimes k} := \begin{cases} \vec{\sigma} \otimes \vec{\sigma}^{\otimes (k-1)} & \mbox{for } k > 1 \\ 
    \vec{\sigma} & \mbox{for } k=1 \end{cases}.
\end{equation*}
\end{definition}
\noindent The \emph{linearisation function} $L$ of $A$ is defined as follows: 
\begin{definition} 
\label{def:Lfun} 
Let the matrix-valued function $A:\mathbb{R}^s \mapsto \mathbb{R}^{n \times n}$ be linear and $\{A_i\}_{i=1}^s \subset \mathbb{R}^{n\times n}$ such that $A(\vec{\sigma}) = \sum_{i=1}^s A_i \sigma_i$ for any $\vec{\sigma} \in S$. The linearisation function $L: \mathbb{R}^{n\times m} \mapsto \mathbb{R}^{n\times s m}$ of $A$ is defined as 
 \begin{equation*}
     L(C) = \begin{bmatrix} A_1 C & A_2 C & \ldots & A_s C \end{bmatrix}
 \end{equation*}
 for any $C \in \mathbb{R}^{n\times m}$.
 \end{definition}
\noindent We write $L^k$ for the functional power,  i.e.,  $L^k:=L\circ L^{k-1}$ for $k\in \mathbb{N}, k > 1$ and $L^1 = L$. Using induction, we obtain the following Lemma that gives the linearisation of $A(\vec{\sigma})^{k} \vec{b}$ with respect to $\vec{\sigma}$.
\begin{lemma} 
\label{lemma:lin} Let $A:\mathbb{R}^s \mapsto \mathbb{R}^{n\times n}$ be linear, and $L: \mathbb{R}^{n\times m} \mapsto \mathbb{R}^{n\times s m}$ be the linearisation function of $A$.  Then 
 \begin{equation*}
     A(\vec{\sigma})^k \vec{b} = L^k(\vec{b}) \vec{\sigma}^{\otimes k}  \quad \mbox{for any $k \in \mathbb{N}$ and $\vec{\sigma} \in \mathbb{R}^s$}
     \label{eq:Lrecursion}
 \end{equation*}
\end{lemma}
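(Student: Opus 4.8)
The plan is to prove the identity $A(\vec{\sigma})^k \vec{b} = L^k(\vec{b})\,\vec{\sigma}^{\otimes k}$ by induction on $k$, using the defining formula for $L$ together with the sum representation $A(\vec{\sigma}) = \sum_{i=1}^s \sigma_i A_i$ and the mixed-product property of the Kronecker product. The base case $k=1$ is immediate: by Definition~\ref{def:Lfun}, $L(\vec{b}) = \begin{bmatrix} A_1 \vec{b} & \cdots & A_s \vec{b}\end{bmatrix} \in \mathbb{R}^{n\times s}$, and multiplying this on the right by $\vec{\sigma}^{\otimes 1} = \vec{\sigma}$ gives $\sum_{i=1}^s \sigma_i A_i \vec{b} = A(\vec{\sigma})\vec{b}$, which is exactly \eqref{eq:sum_rep} applied to $\vec{b}$.

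For the inductive step, I would assume $A(\vec{\sigma})^{k-1}\vec{b} = L^{k-1}(\vec{b})\,\vec{\sigma}^{\otimes(k-1)}$ and compute $A(\vec{\sigma})^k \vec{b} = A(\vec{\sigma})\bigl(A(\vec{\sigma})^{k-1}\vec{b}\bigr) = A(\vec{\sigma})\,L^{k-1}(\vec{b})\,\vec{\sigma}^{\otimes(k-1)}$. Writing $C := L^{k-1}(\vec{b}) \in \mathbb{R}^{n\times s^{k-1}}$ and expanding $A(\vec{\sigma}) = \sum_{i=1}^s \sigma_i A_i$, this becomes $\sum_{i=1}^s \sigma_i A_i C\,\vec{\sigma}^{\otimes(k-1)}$. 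The goal is to recognise this as $L(C)\,\vec{\sigma}^{\otimes k} = L^k(\vec{b})\,\vec{\sigma}^{\otimes k}$, since $L^k = L\circ L^{k-1}$. By Definition~\ref{def:Lfun}, $L(C) = \begin{bmatrix} A_1 C & \cdots & A_s C\end{bmatrix}$, and $\vec{\sigma}^{\otimes k} = \vec{\sigma}\otimes\vec{\sigma}^{\otimes(k-1)}$ is the column vector obtained by stacking the blocks $\sigma_i\,\vec{\sigma}^{\otimes(k-1)}$ for $i=1,\ldots,s$. Therefore the block product $L(C)\,\vec{\sigma}^{\otimes k}$ equals $\sum_{i=1}^s (A_i C)(\sigma_i\,\vec{\sigma}^{\otimes(k-1)}) = \sum_{i=1}^s \sigma_i A_i C\,\vec{\sigma}^{\otimes(k-1)}$, which matches the expression above and closes the induction.

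The only genuinely delicate point is the block-multiplication bookkeeping in the inductive step: one must be careful that the column partition of $L(C)$ into $s$ blocks of width $s^{k-1}$ is conformal with the row partition of $\vec{\sigma}^{\otimes k}$ into $s$ blocks of height $s^{k-1}$, and that the recursive definition $\vec{\sigma}^{\otimes k} = \vec{\sigma}\otimes\vec{\sigma}^{\otimes(k-1)}$ indeed produces precisely the blocks $\sigma_i\,\vec{\sigma}^{\otimes(k-1)}$ in that order. This is exactly the mixed-product / block structure of the Kronecker product and is routine, but it is where a sign or indexing slip would occur. I would also note in passing that the identity holds for all $\vec{\sigma} \in \mathbb{R}^s$ (not just $\vec{\sigma}\in S$), since only the algebraic representation \eqref{eq:sum_rep} is used, not positive definiteness; this is the form in which the lemma is stated and the form needed later to justify $\mathop{span}\{A(\vec{\sigma})^k\vec{b} \mid \vec{\sigma}\in S\} \subset \mathop{range}(L^k(\vec{b}))$ in \eqref{eq:CK_apu}.
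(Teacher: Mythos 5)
Your proposal is correct and follows essentially the same route as the paper's own proof: induction on $k$, expanding $A(\vec{\sigma})=\sum_{i=1}^s\sigma_i A_i$ in the inductive step and recognising $\sum_{i=1}^s \sigma_i A_i L^{k-1}(\vec{b})\,\vec{\sigma}^{\otimes(k-1)}$ as the block product $L\bigl(L^{k-1}(\vec{b})\bigr)\,\vec{\sigma}^{\otimes k}$ via the recursive structure of the Kronecker power. Your added care about the conformal block partitioning and the observation that the identity needs no positive definiteness are fine but do not change the argument.
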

Observe that the column dimension  of $L^k(\vec{b})$ grows exponentially with $k$, particularly, $L^k(\vec{b}) \in \mathbb{R}^{n \times s^k}$.
\begin{proof} The proof follows by induction. By \eqref{eq:sum_rep} and Definition~\ref{def:Lfun},
\begin{equation*}
     A(\vec{\sigma} ) \vec{b} = \sum_{i=1}^s \sigma_i A_i \vec{b}  = L(\vec{b}) \vec{\sigma}.
     \label{eq:L}
\end{equation*}
Let $k\in \mathbb{N}$, and assume that $A(\vec{\sigma})^k \vec{b} = L^k(\vec{b}) \vec{\sigma} ^{\otimes k}$ holds. Then,
\begin{equation*}
    A(\vec{\sigma})^{k+1} \vec{b} = A(\vec{\sigma}) L^k(\vec{b}) \vec{\sigma} ^{\otimes k} =  \sum_{i=1}^s \sigma_i A_i L^k(\vec{b}) \vec{\sigma} ^{\otimes k},
\end{equation*}
and further
\begin{equation*}
    A(\vec{\sigma})^{k+1} \vec{b} = \begin{bmatrix} A_1 L^k(\vec{b}) & \cdots & A_s L^k(\vec{b}) \end{bmatrix} 
    \begin{bmatrix} \sigma_1 \vec{\sigma} ^{\otimes k} \\ \hdots \\ \sigma_s \vec{\sigma} ^{\otimes k} \end{bmatrix}.
\end{equation*}
Recalling the definition of the Kronecker product and linearisation function of $A$ completes the proof.
\end{proof}
\noindent In practical computation, the subspace $CK_{j+1}$ is obtained by augmenting $CK_{j}$ with $\mathop{range}\left(L^j(\vec{b}\right)$. Hence, we use the following recursive definition instead of \eqref{eq:CK_apu}:
\begin{definition}
\label{process:exact} Let $\vec{b} \in \mathbb{R}^n$, $A:\mathbb{R}^s \mapsto \mathbb{R}^{n \times n}$ be linear, and $L$ be the linearisation function of $A$. Then the family of compound Krylov subspaces  $CK_{j}(A,\vec{b})$ is defined as
\begin{equation*}
CK_{1} = \mathop{span}(\vec{b}) \quad \mbox{and} \quad CK_{j+1} = \mathop{range}\left( L^j(\vec{b}) \right) \oplus CK_{j} \quad \mbox{for $j\in \mathbb{N}$, $j>1$}.
\end{equation*}
\end{definition}
\noindent By Lemma~\ref{lemma:lin}, it holds that $K_j(A(\vec{\sigma}),\vec{b}) \subset CK_{j}$ for any $\vec{\sigma} \in S$ and $j \in \mathbb{N}$. Let $\vec{\sigma} \in S$ and $\hat{\vec{x}}_j(\vec{\sigma})$ be the subspace solution to \eqref{eq:linsys} from $CK_{j}$. By the best approximation property~\eqref{eq:proj_error}, $\hat{\vec{x}}_j(\sigma)$ admits identical error estimate with the CG method, this is, 
\begin{equation*}
    \| \vec{x}(\vec{\sigma}) - \hat{\vec{x}}_j(\vec{\sigma}) \|_{A(\sigma)} \leq  2\left( \frac{\sqrt{\kappa(A(\vec{\sigma}))}-1}{\sqrt{\kappa(A(\vec{\sigma}))}+1}\right)^j \| \vec{x}(\vec{\sigma}) \|_{A(\sigma)}
    \quad \mbox{for any $\vec{\sigma} \in S$ and $j\in \mathbb{N}$}.
\end{equation*}

Computing a basis of $CK_{j}$ requires evaluating $\mathop{range}\left(L^k(\vec{b})\right)$ for $k\in\{1,\ldots,j\}$, e.g. by utilising SVD. When doing so, one has to decide which singular values correspond to zero and which do not. For this reason, the authors advice one to use the family of spaces $\{CK_{j}^1\}$, whose definition includes such SVD truncation step, instead of $\{CK_{j}\}$, see Section~\ref{sec:direct_svd}.

Recall that $L^k(\vec{b}) \in \mathbb{R}^{n \times s^k}$ for $k\in \mathbb{N}$. Due to the exponentially growing column dimension, the matrices $L^k(\vec{b})$ are analytical tools that should be avoided in any practical implementation. We explain in Section~\ref{sec:direct_svd} how $\mathop{range}\left(L^k(\vec{b})\right)$ can be computed from the normal form $L^k(\vec{b}) L^k(\vec{b})^T \in \mathbb{R}^{n\times n}$  without constructing $L^k(\vec{b})$.  

\subsection{Direct approximation}
\label{sec:direct_svd}

Next, we define the family of approximate compound Krylov subspaces of the first kind $\{CK_j^1\}$, and give an error estimate for the subspace solution to \eqref{eq:linsys} from $CK_j^1$. We begin with some notation.
\begin{definition} \label{def:low_rank} Let $B \in \mathbb{R}^{n\times m}$, $\delta >0$, and $B=U\Sigma V^T$ be the SVD of $B$. Assume that the singular values of $B$ are in non-increasing order, let $r \in \mathbb{N}$ satisfy 
\begin{equation*}
    \sigma_{r+1} < \delta \leq \sigma_{r}, 
\end{equation*}
$U_{r} = U(:,1:r)$, $\Sigma_r = \Sigma_r(1:r,1:r)$, and $V_r = V(1:r,:)$. We call the matrix $\widehat{B} = U_r \Sigma_r V_r^T$ as well as the triplet $(U_r,\Sigma_r,V_r)$ as the $\delta$-accurate low-rank approximation of $B$. The matrix $\widehat{B}$ satisfies the error estimate
\begin{equation}
\label{eq:low_error}
\| B - \widehat{B} \|_2 = \sigma_{r+1} < \delta.
\end{equation}
\end{definition}
The approximation property in \eqref{eq:low_error} is stated in the $2$-norm. We obtain approximation result in the $\| \cdot \|_{A(\vec{\sigma})}$ - norm by using the norm equivalence 
\begin{equation}
    \label{eq:nequiv}
\lambda_{min}(A(\vec{\sigma})) \| \vec{z} \|_2 \leq \| \vec{z} \|_{A(\vec{\sigma})} \leq \lambda_{max}(A(\vec{\sigma})) \| \vec{z} \|_2 
\end{equation}
valid for any $\vec{z} \in \mathbb{R}^n$ and $\vec{\sigma} \in S$. Let $\widehat{B} \in \mathbb{R}^{n\times n}$ be the $\delta$-accurate low-rank approximation of $B$. By \eqref{eq:nequiv} and the definition of the operator-norm it holds that 
\begin{equation}
    \| B-\widehat{B} \|_{A(\vec{\sigma})} \leq \delta \kappa_2(B) .
\end{equation}

For notational convenience, denote $L_j := L^j(\vec{b})$ for $j \in \mathbb{N}$ and set $L_0 := \vec{b}$. We arrive to the definition of approximate parametric Krylov subspace of the first kind. 
\begin{definition} 
\label{process:svd}  Let $j \in \mathbb{N}$, $\vec{b} \in \mathbb{R}^n$, $A:\mathbb{R}^s \mapsto \mathbb{R}^{n \times n}$ be linear and $L$ be the linearisation function of $A$. In addition, let $\{\delta_k\}_{k=1}^{j-1}$ be a set of positive cut-off tolerances, $L_{0r}:=\vec{b}$, and $L_{kr}$ the $\delta_k$-accurate low-rank approximation of $L_k$ for any~$k\in \{1,\ldots,j-1\}$, see Definition~\ref{def:low_rank}. The space $CK^{1}_{j} = CK^{1}_{j}(A,\vec{b},\{\delta_k\}_{k=1}^{j-1})$ is defined as 
\begin{equation*}
CK^1_{1} = \mathop{span}(\vec{b}) \quad \mbox{and} \quad CK^1_{k+1} = \mathop{range}(L_{kr}) \oplus CK^1_{k} \quad \mbox{for $k \in \{1,\ldots,j-1\}$}.
\end{equation*}
\end{definition}
\begin{remark} Our analysis states that improving the accuracy of the subspace solution from $CK^{1}_j(A,b)$ requires changing $\{ \delta_k \}_{k=1}^{j-1}$ as well as increasing $j$. For this reason, the definition is given for the space $CK_{j}^1$, not for the whole family of spaces.
\end{remark}
We proceed to derive error estimate for the subspace solution to \eqref{eq:linsys} from $CK_{j}^1$. First, recall that the error analysis for the $j$th CG-iterate in Section~\ref{sec:cg} is based on constructing a solution candidate so that the difference between the exact solution and the candidate can be analytically estimated. Repeating steps given in Section~\ref{sec:cg} for problem~\eqref{eq:linsys} with $\vec{\sigma} \in S$ yields the solution candidate $\vec{v}^*_j(\vec{\sigma}) \in K_j(A(\vec{\sigma}),\vec{b})$ defined similar to~\eqref{eq:vstar}, 
\begin{equation*}
    \vec{v}_j^*(\vec{\sigma}) = \sum_{k=1}^j \gamma_{jk}(\vec{\sigma}) A(\vec{\sigma})^{k-1} \vec{b}, 
\end{equation*}
The coefficients $\gamma_{jk}(\vec{\sigma})$ depend on the index $j$ and on the largest as well as the smallest eigenvalue of the matrix $A(\vec{\sigma})$ as in \eqref{eq:vstar}. 

Let $\hat{\vec{x}}_j(\vec{\sigma})$ be the subspace solution to \eqref{eq:linsys} from $CK_{j}^1$. By the best approximation property \eqref{eq:proj_error} and triangle inequality, 
\begin{equation}
\label{eq:apu0}
    \| \hat{\vec{x}}_j(\vec{\sigma}) -\vec{x}(\vec{\sigma}) \|_{A(\vec{\sigma})} \leq \| \vec{v}_j^*(\vec{\sigma}) -\vec{x}(\vec{\sigma}) \|_{A(\vec{\sigma})} + \min_{\vec{v} \in CK_{j}^1} \| \vec{v}_j^*(\vec{\sigma}) - \vec{v} \|_{A(\vec{\sigma})}
\end{equation}
for any $\vec{\sigma} \in S$. The first term on the RHS of \eqref{eq:apu0} is estimated as a part of CG error analysis given in Section~\ref{sec:cg}. The latter term measures how accurately the solution candidate $\vec{v}_j^*(\vec{\sigma})$ can be approximated in $CK_j^1$. The size of this term depends on the choice of the cut-off tolerances $\{\delta_k\}_{k=1}^{j-1}$, and it is estimated in the following Lemma.
\begin{lemma} \label{lemma:v*CK1} Make the same assumptions and use the same notation as in Definition~\ref{process:svd}. Let $\vec{\sigma} \in S$ and $\vec{v}^*_j(\vec{\sigma})$ be as defined in \eqref{eq:vstar}. Then there holds that
\begin{equation}
\label{eq:svdlemma1}
\min_{\vec{v} \in CK_{j}^1} \| \vec{v}_j^*(\vec{\sigma}) - \vec{v} \|_{A(\vec{\sigma})} \leq \lambda_{max}(A(\vec{\sigma})) \sum_{k=1}^{j-1} \delta_k |\gamma_{j(k+1)}(\vec{\sigma})| \| \vec{\sigma}^{\otimes k} \|_2
\end{equation}
for any $j\in \mathbb{N}$ and $\vec{\sigma} \in S$. The coefficients $\gamma_{jk}(\vec{\sigma}) \equiv \gamma_{jk}(\lambda_\mathrm{min}(A(\vec{\sigma})),\lambda_\mathrm{max}(A(\vec{\sigma})))$ are as defined in \eqref{eq:cg_cheb}.
\end{lemma}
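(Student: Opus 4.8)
The plan is to produce, for each fixed $\vec{\sigma}\in S$, an explicit trial vector $\widetilde{\vec{v}}\in CK_j^1$ obtained from $\vec{v}_j^*(\vec{\sigma})$ by replacing every exact linearisation matrix with its truncated low-rank counterpart, and then to bound $\|\vec{v}_j^*(\vec{\sigma})-\widetilde{\vec{v}}\|_{A(\vec{\sigma})}$ term by term. Since the minimum over $CK_j^1$ is no larger than the value at any particular element, this yields \eqref{eq:svdlemma1}.

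First I would rewrite the solution candidate in linearised form. By Lemma~\ref{lemma:lin}, $A(\vec{\sigma})^{k-1}\vec{b}=L_{k-1}\vec{\sigma}^{\otimes(k-1)}$ for $k\ge 2$, while the $k=1$ term is simply $\gamma_{j1}(\vec{\sigma})\vec{b}=\gamma_{j1}(\vec{\sigma})L_0$, so that
\begin{equation*}
\vec{v}_j^*(\vec{\sigma})=\gamma_{j1}(\vec{\sigma})\vec{b}+\sum_{k=2}^{j}\gamma_{jk}(\vec{\sigma})\,L_{k-1}\vec{\sigma}^{\otimes(k-1)}.
\end{equation*}
I would then set
\begin{equation*}
\widetilde{\vec{v}}:=\gamma_{j1}(\vec{\sigma})\vec{b}+\sum_{k=2}^{j}\gamma_{jk}(\vec{\sigma})\,L_{(k-1)r}\vec{\sigma}^{\otimes(k-1)}.
\end{equation*}
Because $\vec{b}\in CK_1^1$ and $L_{(k-1)r}\vec{\sigma}^{\otimes(k-1)}\in\mathop{range}(L_{(k-1)r})$, which is one of the subspaces whose direct sum gives $CK_j^1$ (Definition~\ref{process:svd}), we have $\widetilde{\vec{v}}\in CK_j^1$, and hence the left-hand side of \eqref{eq:svdlemma1} is at most $\|\vec{v}_j^*(\vec{\sigma})-\widetilde{\vec{v}}\|_{A(\vec{\sigma})}$.

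Finally I would subtract and estimate. Since $L_{0r}=\vec{b}=L_0$, the two $k=1$ contributions cancel exactly, so after reindexing $m=k-1$,
\begin{equation*}
\vec{v}_j^*(\vec{\sigma})-\widetilde{\vec{v}}=\sum_{m=1}^{j-1}\gamma_{j(m+1)}(\vec{\sigma})\,(L_m-L_{mr})\,\vec{\sigma}^{\otimes m}.
\end{equation*}
Taking the $A(\vec{\sigma})$-norm, applying the triangle inequality, pulling out the scalars $|\gamma_{j(m+1)}(\vec{\sigma})|$, passing to the Euclidean norm via the right-hand inequality of \eqref{eq:nequiv}, and then using submultiplicativity together with the low-rank bound $\|L_m-L_{mr}\|_2<\delta_m$ from \eqref{eq:low_error} gives exactly the right-hand side of \eqref{eq:svdlemma1}. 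There is no serious obstacle here; the only points needing care are the index shift and the observation that the zeroth-power term carries no approximation error because $L_{0r}$ is the exact vector $\vec{b}$, which is why the sum on the right begins at $m=1$.
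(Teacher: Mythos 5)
Your proposal is correct and follows essentially the same route as the paper's own proof: the same trial vector $\gamma_{j1}\vec{b}+\sum_{k=1}^{j-1}\gamma_{j(k+1)}L_{kr}\vec{\sigma}^{\otimes k}\in CK_j^1$, the same cancellation of the exact $\vec{b}$-term, and the same chain of triangle inequality, norm equivalence \eqref{eq:nequiv}, and the truncation bound \eqref{eq:low_error}. The only difference is that you spell out the index shift explicitly, which the paper leaves implicit.
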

\begin{proof}  By definition \eqref{eq:vstar}, 
\begin{equation*}
\vec{v}_j^*(\vec{\sigma})
= \sum^j_{k=1} \gamma_{jk}(\vec{\sigma}) A^{k-1}(\vec{\sigma}) \vec{b}
= \gamma_{j1} \vec{b} + \sum^{j-1}_{k=1} \gamma_{j(k+1)}(\vec{\sigma}) L_{k} \vec{\sigma}^{\otimes k}.
\end{equation*}
By Definition~\ref{process:svd}, $L_{kr} \vec{\sigma}^{\otimes k} \in CK_{j}^1$ for any $k \in \{1,\ldots,j-1 \}$. The estimate \eqref{eq:svdlemma1} follows by choosing $\vec{v}$ in \eqref{eq:apu} as $\vec{v} = \gamma_{j1} \vec{b} + \sum_{k=1}^{j-1} \gamma_{j(k+1)} L_{kr} \vec{\sigma}^{\otimes k}$ so that 
\begin{equation*}
\vec{v}_j^*(\vec{\sigma}) - \vec{v}
= \sum^{j-1}_{k=1} \gamma_{j(k+1)} (L_k - L_{kr}) \vec{\sigma}^{\otimes k},
\end{equation*}
and then utilising~\eqref{eq:nequiv}, triangle inequality, and \eqref{eq:low_error}.
\end{proof}
\noindent The above discussion is summarised in the following Theorem:
\begin{theorem}
\label{thm:error1}
Let $j\in \mathbb{N}$, $\vec{b} \in \mathbb{R}^n$, $A:\mathbb{R}^s \mapsto \mathbb{R}^{n \times n}$ be linear, and $A(S) \subset \mathbb{S}^n_{++}$. In addition, let $tol > 0$, the cut-off tolerances $\{\delta_k\}_{k=1}^{j-1}$ satisfy 
\begin{equation}
\label{eq:error1_deltak}
   \lambda_{max}(A(\vec{\sigma})) \sum_{k=1}^{j-1} \delta_k |\gamma_{j(k+1)}(\vec{\sigma})| \| \vec{\sigma}^{\otimes k} \|_2 \leq tol
\end{equation}
and $CK_{j}^1$ be the corresponding approximate compound Krylov subspace of the first  kind. Then the subspace solution $\hat{\vec{x}}_j(\vec{\sigma})$ to \eqref{eq:linsys} from $CK_{j}^1$ satisfies
\begin{equation*}
 \| \hat{\vec{x}}_j(\vec{\sigma}) -\vec{x}(\vec{\sigma}) \|_{A(\vec{\sigma})}  \leq 2\Bigg{(}\frac{\sqrt{\kappa(A(\vec{\sigma)})}-1}{\sqrt{\kappa(A(\vec{\sigma}))}+1}\Bigg{)}^j \| \vec{x}(\vec{\sigma}) \|_{A(\vec{\sigma})} + tol.
\end{equation*}
for any $\vec{\sigma} \in S$. 
\end{theorem}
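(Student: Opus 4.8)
The plan is to combine the two ingredients that the preceding material has set up: the best-approximation property of subspace methods, and the CG-style error analysis carried out for the single parameter value $\vec{\sigma}$. First I would invoke the decomposition in \eqref{eq:apu0}, which follows from the best approximation property \eqref{eq:proj_error} together with the triangle inequality applied to the splitting
\[
\hat{\vec{x}}_j(\vec{\sigma}) - \vec{x}(\vec{\sigma}) = \bigl(\hat{\vec{x}}_j(\vec{\sigma}) - \vec{v}\bigr) + \bigl(\vec{v} - \vec{x}(\vec{\sigma})\bigr)
\]
for the specific candidate $\vec{v} = \vec{v}_j^*(\vec{\sigma})$, noting that $\|\hat{\vec{x}}_j(\vec{\sigma}) - \vec{x}(\vec{\sigma})\|_{A(\vec{\sigma})} \le \|\vec{v} - \vec{x}(\vec{\sigma})\|_{A(\vec{\sigma})}$ for every $\vec{v} \in CK_j^1$ since $\hat{\vec{x}}_j(\vec{\sigma})$ is the $A(\vec{\sigma})$-orthogonal projection of $\vec{x}(\vec{\sigma})$. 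This already reduces the theorem to bounding the two terms on the right-hand side of \eqref{eq:apu0} separately.

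Next I would bound the first term, $\|\vec{v}_j^*(\vec{\sigma}) - \vec{x}(\vec{\sigma})\|_{A(\vec{\sigma})}$. This is exactly the quantity controlled by the CG error analysis of Section~\ref{sec:cg}: since $\vec{v}_j^*(\vec{\sigma}) \in K_j(A(\vec{\sigma}),\vec{b})$ is the image under $(q_j^*(A(\vec{\sigma})) - I)A(\vec{\sigma})^{-1}$ of $\vec{b}$ with $q_j^*$ the scaled-and-shifted Chebyshev polynomial on $[\lambda_{\min}(A(\vec{\sigma})),\lambda_{\max}(A(\vec{\sigma}))]$, the bound \eqref{eq:cg_error1} together with the Chebyshev identity \eqref{eq:chebest} gives
\[
\|\vec{v}_j^*(\vec{\sigma}) - \vec{x}(\vec{\sigma})\|_{A(\vec{\sigma})} \le 2\Bigl(\tfrac{\sqrt{\kappa(A(\vec{\sigma}))}-1}{\sqrt{\kappa(A(\vec{\sigma}))}+1}\Bigr)^{j} \|\vec{x}(\vec{\sigma})\|_{A(\vec{\sigma})}.
\]
For the second term I would directly apply Lemma~\ref{lemma:v*CK1}, which bounds $\min_{\vec{v}\in CK_j^1}\|\vec{v}_j^*(\vec{\sigma}) - \vec{v}\|_{A(\vec{\sigma})}$ by $\lambda_{\max}(A(\vec{\sigma}))\sum_{k=1}^{j-1}\delta_k|\gamma_{j(k+1)}(\vec{\sigma})|\,\|\vec{\sigma}^{\otimes k}\|_2$; the hypothesis \eqref{eq:error1_deltak} says precisely that this is at most $tol$. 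Adding the two bounds yields the claimed estimate.

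There is no serious obstacle: the theorem is essentially a packaging of Lemma~\ref{lemma:v*CK1} and the CG analysis, so the "main difficulty" is only bookkeeping — making sure the coefficients $\gamma_{jk}(\vec{\sigma})$ are consistently interpreted as $\gamma_{jk}(\lambda_{\min}(A(\vec{\sigma})),\lambda_{\max}(A(\vec{\sigma})))$, that the candidate $\vec{v}_j^*(\vec{\sigma})$ appearing in \eqref{eq:apu0} is the same one used in Lemma~\ref{lemma:v*CK1}, and that the condition number $\kappa$ in the Chebyshev term and in \eqref{eq:kryloverror} is $\kappa_2(A(\vec{\sigma})) = \lambda_{\max}/\lambda_{\min}$. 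One should also note that the error analysis of Section~\ref{sec:cg} applies to $A(\vec{\sigma})$ for every fixed $\vec{\sigma}\in S$ because $A(S)\subset\mathbb{S}^n_{++}$, which is part of the hypotheses. I would close by remarking that since the bound holds for every $\vec{\sigma}\in S$ with the same subspace $CK_j^1$, the construction achieves the stated goal of a $\vec{\sigma}$-independent method subspace whose subspace solution has CG-comparable accuracy uniformly over $S$.
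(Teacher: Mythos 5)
Your proposal is correct and follows essentially the same route as the paper, which itself presents Theorem~\ref{thm:error1} as a summary of the preceding discussion: the decomposition \eqref{eq:apu0} obtained from the best approximation property \eqref{eq:proj_error} and the triangle inequality, the Chebyshev/CG bound \eqref{eq:cg_error1}--\eqref{eq:chebest} for the term $\| \vec{v}_j^*(\vec{\sigma})-\vec{x}(\vec{\sigma})\|_{A(\vec{\sigma})}$, and Lemma~\ref{lemma:v*CK1} together with hypothesis \eqref{eq:error1_deltak} for the term $\min_{\vec{v}\in CK_j^1}\|\vec{v}_j^*(\vec{\sigma})-\vec{v}\|_{A(\vec{\sigma})}$. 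The only nitpick is your phrasing of how \eqref{eq:apu0} is derived (the triangle inequality should be applied to an arbitrary $\vec{v}\in CK_j^1$ via $\|\vec{v}-\vec{x}\|_{A(\vec{\sigma})}\leq\|\vec{v}-\vec{v}_j^*\|_{A(\vec{\sigma})}+\|\vec{v}_j^*-\vec{x}\|_{A(\vec{\sigma})}$ and then minimised, since $\vec{v}_j^*(\vec{\sigma})$ itself need not lie in $CK_j^1$), but this does not affect the substance of the argument.
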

According to Theorem~\ref{thm:error1}, cut-off tolerances that yield error level $tol$ depend on $\| \vec{\sigma}^{\otimes k} \|_2$ and the coefficients $\{\gamma_{j(k+1)}\}_{k=1}^{j-1}$, that can both be be large. Particularly $\| \vec{\sigma}^{\otimes k} \|_2 = \| \vec{\sigma} \|^k_2$, which is large if $\|\vec{\sigma}\|_2$ is. In Section~\ref{sec:fem}, each component of $\vec{\sigma}$ is allowed to vary between given bounds, hence, it is natural to estimate
\begin{equation}
    \| \vec{\sigma} \|_2 \leq \sqrt{s} \| \vec{\sigma} \|_{\infty} \quad \mbox{so that} \quad 
    \| \vec{\sigma}^{\otimes k} \|_2 \leq s^{k/2} \| \vec{\sigma} \|^k_{\infty}.
\end{equation}
We do not estimate the size of the coefficients $|\gamma_{jk}(\vec{\sigma})|$, but regard them as constants that depend on $S$, $j$, and $A$. 
%
\begin{comment}
\begin{remark}
We attempted to scale \eqref{eq:linsys} in several ways to eliminate the term $\| \vec{\sigma}^{\otimes k} \|$ from \eqref{eq:error1_deltak}, and found the scaling: find $\vec{z}(\vec{\widetilde{\sigma}})$ satisfying 
%
\begin{equation*}
    A( \widetilde{\vec{\sigma}} ) \vec{z}( \widetilde{\vec{\sigma}} )
    = 
    \vec{b} \quad \mbox{for} \quad \widetilde{\vec{\sigma}} = \frac{\vec{\sigma}}{\| \vec{\sigma} \|_2} \quad \mbox{and} \quad \vec{x} = \frac{\vec{z}}{\| \vec{\sigma} \|_2}.
\end{equation*}
%
as the most interesting. Particularly, this scaling seems to eliminate the exponential term in \eqref{eq:error1_deltak}. However, at the same time the coefficients $\gamma_{jk}(\vec{\sigma})$ are transformed to $\gamma_{jk}(\widetilde{\vec{\sigma}})$ that depend exponentially on $\| \vec{\sigma} \|_2$.
\end{remark}
\end{comment}

As the column dimension of $L_k$ increases exponentially with $k$, these matrices cannot be constructed in practice. Next, we discuss a feasible computational strategy for constructing a basis for $CK^1_{j}$ that does not use $L_k$. 

By Definition~\ref{process:svd}, a basis for $CK_{k}^1$ is obtained from the bases of $CK_{k-1}^1$ and $\mathop{range}(L_{kr})$. Recall that 
 \begin{equation}
 \label{eq:LLT}
     L_k L_k^T = U_k \Lambda_k U_k^T \quad \mbox{for} \quad  \Lambda = \Sigma_k \Sigma_k^T = \mathop{diag}(\sigma_1^2,\ldots,\sigma_n^2),
 \end{equation}
where $U_k$ is the matrix of right singular vectors, and $\{ \sigma_i \}_{i=1}^n$ is the set of singular values of $L_k$. Hence, the range of $L_{kr}$ is obtained by computing largest eigenvalues and eigenvectors of \emph{the normal form} $L_k L_k^T$. The next Lemma gives a way to compute $L_k L_k^T$ without explicitly constructing $L_k$.
 
\begin{lemma} \label{lemma:normal_form} Let $L:\mathbb{R}^{n\times m} \mapsto \mathbb{R}^{n\times sm}$ be as in Definition~\ref{def:Lfun} and $\vec{b} \in \mathbb{R}^n$. Denote $L_k = L^k(\vec{b})$ and let $L_0L_0^T = \vec{b} \vec{b}^T$. Then there holds that
\begin{equation}
L_k L_k^T = \sum_{i=1}^s A_i L_{k-1}L_{k-1}^T A_i^T \quad \mbox{for any $k \in \mathbb{N}$}.
\end{equation}
\end{lemma}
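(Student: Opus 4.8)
The plan is to prove the identity by induction on $k$, using the recursive definition $L^k = L \circ L^{k-1}$ together with the block structure of the linearisation function $L$ from Definition~\ref{def:Lfun}. The base case $k=1$ is immediate: by definition $L_1 = L^1(\vec{b}) = L(\vec{b}) = \begin{bmatrix} A_1 \vec{b} & \cdots & A_s \vec{b} \end{bmatrix}$, so that $L_1 L_1^T = \sum_{i=1}^s (A_i \vec{b})(A_i \vec{b})^T = \sum_{i=1}^s A_i \vec{b}\vec{b}^T A_i^T = \sum_{i=1}^s A_i L_0 L_0^T A_i^T$, using $L_0 L_0^T = \vec{b}\vec{b}^T$.

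For the inductive step, the key observation is that for any matrix $C \in \mathbb{R}^{n\times m}$ the linearisation function produces a block-row matrix $L(C) = \begin{bmatrix} A_1 C & A_2 C & \cdots & A_s C \end{bmatrix}$, and therefore the product $L(C) L(C)^T$ telescopes into a sum over the blocks:
\begin{equation*}
L(C) L(C)^T = \sum_{i=1}^s (A_i C)(A_i C)^T = \sum_{i=1}^s A_i \, C C^T \, A_i^T .
\end{equation*}
This holds because in the block-matrix product, the off-diagonal blocks $A_i C C^T A_j^T$ for $i \neq j$ do not actually appear — a block-row times its transpose (a block-column) is exactly the sum of $(A_i C)(A_i C)^T$. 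Now I would apply this with $C = L_{k-1} = L^{k-1}(\vec{b})$, so that $L_k = L^k(\vec{b}) = L(L^{k-1}(\vec{b})) = L(L_{k-1})$, giving
\begin{equation*}
L_k L_k^T = L(L_{k-1}) L(L_{k-1})^T = \sum_{i=1}^s A_i \, L_{k-1} L_{k-1}^T \, A_i^T ,
\end{equation*}
which is exactly the claimed identity. Note this argument does not even need the inductive hypothesis on the $(k-1)$-level identity in a nested way; the single structural fact about $L(C)L(C)^T$ suffices once we recognize $L_k = L(L_{k-1})$. Still, phrasing it as an induction makes the role of $L_0$ and the functional power $L^k = L\circ L^{k-1}$ transparent.

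The only mild subtlety — and the one place to be careful — is justifying the block-telescoping step $L(C)L(C)^T = \sum_i A_i CC^T A_i^T$ rigorously rather than by inspection, since $A_i$ is $n\times n$ and $C$ is $n\times m$, so $L(C)$ is $n \times sm$ and the transpose $L(C)^T$ is $sm \times n$, partitioned into $s$ stacked blocks $(A_i C)^T$ of size $m\times n$. The product of the block row with the block column is then literally $\sum_{i=1}^s (A_i C)(A_i C)^T$ by the definition of block matrix multiplication, with no cross terms. I expect no real obstacle here; the main thing is to state the block dimensions cleanly so the reader sees why only the "diagonal" contributions survive. One could also cite Lemma~\ref{lemma:lin} to motivate why $L_k L_k^T$ is the relevant object (its range contains all Krylov vectors $A(\vec{\sigma})^k\vec{b}$), but that is not needed for the proof itself.
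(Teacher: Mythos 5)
Your proof is correct and matches the paper's intended argument: the paper's own proof is simply stated as ``By induction,'' and your write-up supplies exactly the details that induction would use, namely $L_k = L(L_{k-1})$ together with the block-row-times-block-column identity $L(C)L(C)^T = \sum_{i=1}^s A_i\, CC^T A_i^T$. Your remark that this single structural fact already gives the result without genuinely invoking the inductive hypothesis is accurate, but it is a refinement of, not a departure from, the paper's approach.
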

\begin{proof} By induction. \end{proof}  
\begin{remark}
\label{remark:normalform} Algorithms based on normal forms are avoided in numerical linear algebra due to numerical stability issues arising from finite precision arithmetics and increased condition number, see \cite{Higham:2002}. The methods proposed in this work are intended to be used in connection with the finite element method. As the error due to finite element discretization is typically much larger than error due to finite precision, we are not concerned with numerical stability. 
\end{remark}

\begin{remark}
\label{remark:projection}
In our preliminary numerical experiments we observed that the intersection of $\mathop{range}(L_{kr})$ and $CK_{k-1}^1$ can have a large dimension. To reduce the cost of computing the required eigenpairs of $L_k L_k^T$, we eliminate all information that is already contained in $CK_{k-1}^{1}$ by orthogonal projection in the $2$-inner product $\Pi_k$ to $CK_{k-1}^{1}$. This is, we construct a low-rank approximation to range of the operator $(I-\Pi_k) L_k$. Such strategy leads to identical error estimate as the one given in Theorem~\ref{thm:error1}, but is possibly more efficient. As $\mathop{range} (I-\Pi_k) L_k$ is orthogonal to $CK_{k-1}^{1}$, the projection also simplifies the construction of $CK_{k}^1$.  
\end{remark}

\subsection{Intermediate approximation}
\label{sec:inter_svd}
In this section, we define the family of approximate compound Krylov subspaces of the second kind $\{ CK_{j}^2\}$. Similar method is used in \cite{BaCo:17} to define Neumann series based approximation of the parameter-to-solution map for parametric operator equation. First, recall the linearisation given in Lemma~\ref{lemma:lin}:
\begin{equation*}
    A(\vec{\sigma})^{k} \vec{b} = L^k(\vec{b}) \vec{\sigma}^{\otimes k} \quad \mbox{for $k\in \mathbb{N}$ and $\vec{\sigma} \in \mathbb{R}^s$},
\end{equation*}
where $L$ is the linearisation function of $A$ and $L^k(\cdot)$ denotes the $k$th functional power evaluated recursively as 
\begin{equation}
\label{eq:tmp1}
L^k(\cdot) = L(L^{k-1}(\cdot)) \quad\mbox{for $k\in \mathbb{N}$, $k>1$}.
\end{equation}
The approximate CK subspaces of the first kind are obtained by a low-rank approximation of $L^k(\vec{b})$, whereas subspaces of the second kind as constructed by including a low-rank approximation step to the recursion in~\eqref{eq:tmp1}. We proceed by defining a sequence of \emph{approximate linearisation matrices} $\{\widehat{L}_k\}$. 
 \begin{definition} \label{process:CK2} 
 Let $j\in \mathbb{N}$, $\vec{b} \in \mathbb{R}^n$, $A:\mathbb{R}^s \mapsto \mathbb{R}^{n\times n}$ be linear, and $L$ be the linearisation function of $A$. In addition, let $\{\delta_k\}_{k=1}^{j}$ be the set of cut-off tolerances. The sequence of approximate linearisation matrices $\{\widehat{L}_k\}_{k=1}^j$ of $(A$, $\vec{b}$, $\{\delta_k\}_{k=1}^j)$  is  defined as follows: $\widehat{L}_0 := \vec{b}$, and $\widehat{L}_{k}$ is the $\delta_k$-accurate low-rank approximation of $L(\widehat{L}_{k-1})$ for $k \in \{1,\ldots,j\}$. 
 \end{definition} 
\begin{definition} \label{process:interspace} Make the same assumptions and use the same notation as in Process \ref{process:CK2}. Particularly, let $j\in \mathbb{N}$ and $\{\widehat{L}_k\}_{k=1}^{j-1}$ be the sequence of approximate linearisation matrices of $(A$, $\vec{b}$, $\{\delta_k\}_{k=1}^{j-1})$. Then 
\begin{equation}
CK_{1}^{2} = \mathop{span}(\vec{b}) \quad \mbox{and} \quad CK_{k+1}^{2} = \mathop{range}(\widehat{L}_{k}) \oplus CK^2_{k} \quad \mbox{for $k\in \{1,\ldots,j-1\}$.}
\label{eq:intermedspace}
\end{equation}
\end{definition}

Let $\vec{\sigma} \in S$ and $\hat{\vec{x}}_j(\vec{\sigma})$ be the subspace solution from $CK_{j}^2$. Next, we estimate the error $\vec{x}(\vec{\sigma}) - \hat{\vec{x}}_j(\vec{\sigma})$ by using the same approach as in Section~\ref{sec:direct_svd}. This is, we study how accurately $\vec{v}^*_j(\vec{\sigma})$, defined in \eqref{eq:vstar}, can be approximated in $CK^2_{j}$. First, the error related to the approximation $A(\vec{\sigma})^k \vec{b} \approx \widehat{L}_{k} \vec{\sigma}^{\otimes j}$ is bounded by using the properties of the linearisation function:
\begin{lemma} 
\label{lemma:Lprop} Let $A:\mathbb{R}^s \mapsto \mathbb{R}^{n\times n}$ be linear and $L$ be the linearisation function of $A$. Then for any $\vec{\sigma} \in S$, $k \in \mathbb{N}$ and $B_1,B_2 \in \mathbb{R}^{n\times m}$ it holds that:

\renewcommand{\labelenumi}{(\roman{enumi})}
\begin{enumerate}
    \item $A(\vec{\sigma}) B_1 \vec{\sigma}^{\otimes k} = L( B_1 ) \vec{\sigma}^{\otimes (k+1) }$
    \end{enumerate}
    \noindent and 
    \begin{enumerate}
    \setcounter{enumi}{1}
    \item $L(B_1 + B_2) = L(B_1) + L(B_2)$.
\end{enumerate}
\end{lemma}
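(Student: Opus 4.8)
The plan is to verify Lemma~\ref{lemma:Lprop} by direct computation from Definition~\ref{def:Lfun}, using the representation $A(\vec{\sigma}) = \sum_{i=1}^s \sigma_i A_i$ from \eqref{eq:sum_rep} together with the recursive structure of the Kronecker power from Definition~\ref{def:k_kron}. Both parts are routine; the only mild subtlety is bookkeeping the block structure of $L(B_1)$ against the block structure of $\vec{\sigma}^{\otimes(k+1)}$.

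For part (i), I would start from $A(\vec{\sigma}) B_1 \vec{\sigma}^{\otimes k} = \sum_{i=1}^s \sigma_i A_i B_1 \vec{\sigma}^{\otimes k}$. Then I would write the right-hand side as a block matrix-vector product:
\begin{equation*}
\sum_{i=1}^s \sigma_i A_i B_1 \vec{\sigma}^{\otimes k} = \begin{bmatrix} A_1 B_1 & \cdots & A_s B_1 \end{bmatrix} \begin{bmatrix} \sigma_1 \vec{\sigma}^{\otimes k} \\ \vdots \\ \sigma_s \vec{\sigma}^{\otimes k} \end{bmatrix}.
\end{equation*}
The first factor is exactly $L(B_1)$ by Definition~\ref{def:Lfun}. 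For the second factor, the definition of the Kronecker product gives $\vec{\sigma} \otimes \vec{\sigma}^{\otimes k} = \begin{bmatrix} \sigma_1 \vec{\sigma}^{\otimes k} & \cdots & \sigma_s \vec{\sigma}^{\otimes k}\end{bmatrix}^T$, which by Definition~\ref{def:k_kron} equals $\vec{\sigma}^{\otimes(k+1)}$. Combining these identifies the product as $L(B_1)\vec{\sigma}^{\otimes(k+1)}$, which is (i). This is essentially the computation already carried out in the proof of Lemma~\ref{lemma:lin}, specialized to a general matrix $B_1$ in place of $L^k(\vec{b})$, so I would either reference that argument or repeat it in one line.

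For part (ii), linearity of $L$ is immediate from its definition: $L(B_1 + B_2) = \begin{bmatrix} A_1(B_1+B_2) & \cdots & A_s(B_1+B_2)\end{bmatrix} = \begin{bmatrix} A_1 B_1 & \cdots & A_s B_1 \end{bmatrix} + \begin{bmatrix} A_1 B_2 & \cdots & A_s B_2 \end{bmatrix} = L(B_1) + L(B_2)$, using only distributivity of matrix multiplication over addition and the blockwise definition of matrix addition. There is no real obstacle here; if anything, the ``hard part'' is purely notational — making sure the block partitioning of $\vec{\sigma}^{\otimes(k+1)}$ into $s$ consecutive copies of $\vec{\sigma}^{\otimes k}$ (scaled by the components of $\vec{\sigma}$) is stated cleanly so that the matrix-vector product in (i) lines up. I would keep the whole proof to a few lines.
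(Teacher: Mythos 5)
Your proposal is correct; the paper gives no explicit proof of this lemma, treating it as immediate, and your verification of (i) is exactly the block matrix-vector computation already carried out in the induction step of the paper's proof of Lemma~\ref{lemma:lin} (with $B_1$ in place of $L^k(\vec{b})$), while (ii) is the same one-line appeal to the blockwise definition of $L$. So this is essentially the paper's own approach, just written out.
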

\begin{lemma} \label{lemma:approx_lin} Let $j\in \mathbb{N}$, $\vec{b} \in \mathbb{R}^n$, $A:\mathbb{R}^s \mapsto \mathbb{R}^{n\times n}$ be linear, and $L$ the linearisation function of $A$. In addition, let $\{\delta_k\}_{k=1}^j$ be the set of cut-off tolerances and $\{ \widehat{L}_k \}_{k=1}^j$ the approximate linearisation matrices of $(A$,$\vec{b}$,$\{\delta_k\}_{k=1}^j)$ as in Definition~\ref{process:CK2}. Then 
\begin{equation*}
\|\widehat{L}_{j} \vec{\sigma}^{\otimes j} - A(\vec{\sigma})^j \vec{b} \|_2 \leq \sum_{k=1}^j \delta_k \| A(\vec{\sigma}) \|_2^{j-k}  \| \vec{\sigma}^{\otimes k} \|_2.
\end{equation*}
for any $j\in \mathbb{N}$.
\end{lemma}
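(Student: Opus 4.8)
The plan is to prove Lemma~\ref{lemma:approx_lin} by induction on $j$, using Lemma~\ref{lemma:Lprop} to translate the recursive definition of $\widehat{L}_k$ into a telescoping estimate. The key idea is to insert intermediate terms: the difference $\widehat{L}_j \vec{\sigma}^{\otimes j} - A(\vec{\sigma})^j \vec{b}$ is split by comparing $\widehat{L}_j$ against $L(\widehat{L}_{j-1})$ (the one-step low-rank approximation error, controlled by $\delta_j$) and then comparing $L(\widehat{L}_{j-1})\vec{\sigma}^{\otimes j}$ against $A(\vec{\sigma})^j \vec{b}$, which by Lemma~\ref{lemma:Lprop}(i) equals $A(\vec{\sigma})(\widehat{L}_{j-1}\vec{\sigma}^{\otimes(j-1)} - A(\vec{\sigma})^{j-1}\vec{b})$ after recognizing $A(\vec{\sigma})^j \vec{b} = A(\vec{\sigma})\, A(\vec{\sigma})^{j-1}\vec{b} = A(\vec{\sigma})\, L^{j-1}(\vec{b})\vec{\sigma}^{\otimes(j-1)}$.

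Concretely, the base case $j=1$: $\widehat{L}_1$ is the $\delta_1$-accurate low-rank approximation of $L(\widehat{L}_0) = L(\vec{b})$, and $A(\vec{\sigma})\vec{b} = L(\vec{b})\vec{\sigma}$ by Lemma~\ref{lemma:lin}, so $\|\widehat{L}_1\vec{\sigma} - A(\vec{\sigma})\vec{b}\|_2 = \|(\widehat{L}_1 - L(\vec{b}))\vec{\sigma}\|_2 \leq \|\widehat{L}_1 - L(\vec{b})\|_2 \|\vec{\sigma}\|_2 < \delta_1 \|\vec{\sigma}\|_2$ using \eqref{eq:low_error}, matching the claimed bound with $j=1$. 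For the inductive step, assume the bound holds for $j-1$. Write
\begin{equation*}
\widehat{L}_j \vec{\sigma}^{\otimes j} - A(\vec{\sigma})^j \vec{b} = \bigl(\widehat{L}_j - L(\widehat{L}_{j-1})\bigr)\vec{\sigma}^{\otimes j} + \bigl(L(\widehat{L}_{j-1})\vec{\sigma}^{\otimes j} - A(\vec{\sigma})^j \vec{b}\bigr).
\end{equation*}
The first term is bounded by $\|\widehat{L}_j - L(\widehat{L}_{j-1})\|_2 \|\vec{\sigma}^{\otimes j}\|_2 < \delta_j \|\vec{\sigma}^{\otimes j}\|_2$ since $\widehat{L}_j$ is the $\delta_j$-accurate low-rank approximation of $L(\widehat{L}_{j-1})$. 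For the second term, Lemma~\ref{lemma:Lprop}(i) with $B_1 = \widehat{L}_{j-1}$ and $k = j-1$ gives $L(\widehat{L}_{j-1})\vec{\sigma}^{\otimes j} = A(\vec{\sigma})\widehat{L}_{j-1}\vec{\sigma}^{\otimes(j-1)}$, and applying Lemma~\ref{lemma:lin} to $A(\vec{\sigma})^j\vec{b} = A(\vec{\sigma}) A(\vec{\sigma})^{j-1}\vec{b} = A(\vec{\sigma}) L^{j-1}(\vec{b})\vec{\sigma}^{\otimes(j-1)}$ shows the second term equals $A(\vec{\sigma})\bigl(\widehat{L}_{j-1}\vec{\sigma}^{\otimes(j-1)} - A(\vec{\sigma})^{j-1}\vec{b}\bigr)$; its norm is at most $\|A(\vec{\sigma})\|_2$ times the inductive bound, i.e. $\|A(\vec{\sigma})\|_2 \sum_{k=1}^{j-1}\delta_k \|A(\vec{\sigma})\|_2^{(j-1)-k}\|\vec{\sigma}^{\otimes k}\|_2 = \sum_{k=1}^{j-1}\delta_k\|A(\vec{\sigma})\|_2^{j-k}\|\vec{\sigma}^{\otimes k}\|_2$. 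Adding the two contributions and noting the first term supplies exactly the missing $k=j$ summand (with $\|A(\vec{\sigma})\|_2^{j-j} = 1$) yields the claimed inequality.

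I do not expect a serious obstacle here; the argument is a clean telescoping induction. The one point requiring a little care is the bookkeeping of exponents on $\|A(\vec{\sigma})\|_2$ so that the factor from applying $A(\vec{\sigma})$ once in the inductive step correctly shifts $\|A(\vec{\sigma})\|_2^{(j-1)-k}$ to $\|A(\vec{\sigma})\|_2^{j-k}$, and verifying that the $\delta_j$ term from the low-rank approximation step matches the $k=j$ term of the sum with the convention that its $\|A(\vec{\sigma})\|_2$-power is zero. A secondary subtlety worth a sentence is that Lemma~\ref{lemma:Lprop}(i) is stated for $\vec{\sigma}\in S$ and arbitrary matrices $B_1$, so it applies verbatim with $B_1 = \widehat{L}_{j-1}$; no extra hypotheses on the dimensions are needed since the column count of $\widehat{L}_{j-1}$ is automatically compatible after the low-rank truncation.
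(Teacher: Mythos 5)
Your proof is correct and is essentially the paper's argument: the paper writes the same one-step decomposition $(\widehat{L}_{k+1}-L_{k+1})\vec{\sigma}^{\otimes(k+1)} = L(\widehat{L}_k-L_k)\vec{\sigma}^{\otimes(k+1)} + (\widehat{L}_{k+1}-L(\widehat{L}_k))\vec{\sigma}^{\otimes(k+1)}$, applies Lemma~\ref{lemma:Lprop} to pull out a factor $A(\vec{\sigma})$, and obtains the recurrence $\xi_{k+1}\leq \|A(\vec{\sigma})\|_2\,\xi_k + \delta_{k+1}\|\vec{\sigma}^{\otimes(k+1)}\|_2$ with $\xi_0=0$, which it then solves; your induction on $j$ with the closed-form bound is the same telescoping estimate, just unrolled explicitly.
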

\begin{proof} Let $\{L_k\}$ be the linearisation matrices of $(A,\vec{b})$ so that $A(\vec{\sigma})^k \vec{b} = L_k \vec{\sigma}^{\otimes k}$. There holds that 
\begin{equation*}
\left( \widehat{L}_{k+1} - L_{k+1} \right) \vec{\sigma}^{\otimes (k+1)} = 
L(\widehat{L}_{k} - L_{k}) \vec{\sigma}^{\otimes (k+1)} + \left( \widehat{L}_{k+1} - L(\widehat{L}_k) \right) \vec{\sigma}^{\otimes( k+1)}.
\end{equation*}
Using properties of the linearisation function in Lemma~\ref{lemma:Lprop} yields
\begin{equation*}
(\widehat{L}_{k+1} - L_{k+1}) \vec{\sigma}^{\otimes(k+1)} = A(\vec{\sigma} ) (\widehat{L}_{k}-L_{k}) \vec{\sigma}^{\otimes k} + \left( \widehat{L}_{k+1} - L(\widehat{L}_k) \right) \vec{\sigma}^{\otimes (k+1)}.
\end{equation*}
Denote $\xi_k := \|  (\widehat{L}_{k} - L_{k}) \vec{\sigma}^{\otimes k} \|$. Then  
\begin{equation*}
\xi_{k+1} \leq  
\| A(\vec{\sigma}) \| \eta_k + \delta_{k+1} \| \vec{\sigma}^{\otimes k+1} \|. 
\end{equation*}
and $\xi_0 = 0$. Solving this non-homogeneous recurrence relation completes the proof.
\end{proof}
Combining the approximation result in Lemma~\ref{lemma:approx_lin} with technique used in the proof of Lemma~\ref{lemma:v*CK1} gives an error estimate for the subspace solution from the compound Krylov subspace of the second kind.
\begin{theorem}
\label{thm:error2} 
Let $j\in \mathbb{N}$, $\vec{b} \in \mathbb{R}^n$, $A:\mathbb{R}^s \mapsto \mathbb{R}^{n \times n}$ be linear, and $A(S) \subset \mathbb{S}^n_{++}$. In addition, let $tol > 0$, the cut-off tolerances $\{\delta_k\}_{k=1}^{j-1}$ satisfy 
\begin{equation}
\label{eq:error2_deltak}
   \lambda_{max}(A(\vec{\sigma})) \sum_{k=1}^{j-1} \left( |\gamma_{j(k+1)}(\vec{\sigma})|  \sum_{l=1}^k \delta_l \| A(\vec{\sigma}) \|_2^{k-l}  \| \vec{\sigma}^{\otimes l} \|_2 \right) \leq  tol,
\end{equation}
for any $\vec{\sigma} \in S$, and $CK_{j}^2$ be the corresponding approximate compound Krylov subspace of the second kind. Then the subspace solution $\hat{\vec{x}}_j(\vec{\sigma})$ to \eqref{eq:linsys} from $CK_{j}^2$ satisfies
\begin{equation*}
 \| \hat{\vec{x}}_j(\vec{\sigma}) -\vec{x}(\vec{\sigma}) \|_{A(\vec{\sigma})}  \leq 2\Bigg{(}\frac{\sqrt{\kappa(A(\vec{\sigma)})}-1}{\sqrt{\kappa(A(\vec{\sigma}))}+1}\Bigg{)}^j \| \vec{x}(\vec{\sigma}) \|_{A(\vec{\sigma})} + tol.
\end{equation*}
for any $\vec{\sigma} \in S$. 
\end{theorem}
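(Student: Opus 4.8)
The plan is to follow the argument of Theorem~\ref{thm:error1} line by line, substituting the approximate linearisation matrices $\widehat{L}_k$ for the direct low-rank approximations $L_{kr}$ and invoking Lemma~\ref{lemma:approx_lin} in place of the elementary low-rank bound~\eqref{eq:low_error}. Fix $\vec{\sigma}\in S$, let $\hat{\vec{x}}_j(\vec{\sigma})$ be the subspace solution to~\eqref{eq:linsys} from $CK_{j}^{2}$, and let $\vec{v}_j^*(\vec{\sigma})\in K_j(A(\vec{\sigma}),\vec{b})$ be the CG solution candidate of~\eqref{eq:vstar}. Exactly as in~\eqref{eq:apu0}, the best approximation property~\eqref{eq:proj_error} and the triangle inequality give
\begin{equation*}
\| \hat{\vec{x}}_j(\vec{\sigma}) - \vec{x}(\vec{\sigma}) \|_{A(\vec{\sigma})} \leq \| \vec{v}_j^*(\vec{\sigma}) - \vec{x}(\vec{\sigma}) \|_{A(\vec{\sigma})} + \min_{\vec{v} \in CK_j^{2}} \| \vec{v}_j^*(\vec{\sigma}) - \vec{v} \|_{A(\vec{\sigma})}.
\end{equation*}
The first term on the right is precisely the quantity controlled by the Conjugate Gradient error analysis of Section~\ref{sec:cg} applied to~\eqref{eq:linsys} with $B=A(\vec{\sigma})$ and $\vec{g}=\vec{b}$; combining~\eqref{eq:cg_error1} with the scaled Chebychev polynomial of~\eqref{eq:cg_cheb} and the identity~\eqref{eq:chebest} yields
\begin{equation*}
\| \vec{v}_j^*(\vec{\sigma}) - \vec{x}(\vec{\sigma}) \|_{A(\vec{\sigma})} \leq 2\Bigg{(}\frac{\sqrt{\kappa(A(\vec{\sigma}))}-1}{\sqrt{\kappa(A(\vec{\sigma}))}+1}\Bigg{)}^j \| \vec{x}(\vec{\sigma}) \|_{A(\vec{\sigma})},
\end{equation*}
which is exactly the first term of the asserted bound.

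For the second term I would mimic the proof of Lemma~\ref{lemma:v*CK1}. By Lemma~\ref{lemma:lin} one has $A(\vec{\sigma})^k\vec{b}=L_k\vec{\sigma}^{\otimes k}$, so $\vec{v}_j^*(\vec{\sigma})=\gamma_{j1}\vec{b}+\sum_{k=1}^{j-1}\gamma_{j(k+1)}(\vec{\sigma})L_k\vec{\sigma}^{\otimes k}$. I then choose the trial vector $\vec{v}=\gamma_{j1}\vec{b}+\sum_{k=1}^{j-1}\gamma_{j(k+1)}(\vec{\sigma})\,\widehat{L}_k\vec{\sigma}^{\otimes k}$, which lies in $CK_j^{2}$ because $\vec{b}\in CK_1^{2}$ and $\widehat{L}_k\vec{\sigma}^{\otimes k}\in\mathop{range}(\widehat{L}_k)\subset CK_{k+1}^{2}\subseteq CK_j^{2}$ for $k\in\{1,\ldots,j-1\}$ by Definition~\ref{process:interspace}. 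With this choice $\vec{v}_j^*(\vec{\sigma})-\vec{v}=\sum_{k=1}^{j-1}\gamma_{j(k+1)}(\vec{\sigma})(L_k-\widehat{L}_k)\vec{\sigma}^{\otimes k}$, and applying the norm equivalence~\eqref{eq:nequiv}, the triangle inequality, and then the estimate of Lemma~\ref{lemma:approx_lin} to each summand $\|(L_k-\widehat{L}_k)\vec{\sigma}^{\otimes k}\|_2=\|\widehat{L}_k\vec{\sigma}^{\otimes k}-A(\vec{\sigma})^k\vec{b}\|_2$ gives
\begin{equation*}
\min_{\vec{v} \in CK_j^{2}} \| \vec{v}_j^*(\vec{\sigma}) - \vec{v} \|_{A(\vec{\sigma})} \leq \lambda_{max}(A(\vec{\sigma}))\sum_{k=1}^{j-1}|\gamma_{j(k+1)}(\vec{\sigma})|\sum_{l=1}^{k}\delta_l\,\|A(\vec{\sigma})\|_2^{k-l}\,\|\vec{\sigma}^{\otimes l}\|_2.
\end{equation*}
The right-hand side is exactly the left-hand side of~\eqref{eq:error2_deltak}, hence bounded by $tol$ under the hypothesis. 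Adding the two estimates completes the proof.

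I do not anticipate a real obstacle: the only substantive analytic step, namely propagating the per-step truncation errors $\delta_l$ through the powers of $A(\vec{\sigma})$ by solving a non-homogeneous recurrence, has already been carried out in Lemma~\ref{lemma:approx_lin}, and the overall structure is forced by Theorem~\ref{thm:error1}. The points that require a little care are verifying that the trial vector $\vec{v}$ really belongs to $CK_j^{2}$ — immediate from Definition~\ref{process:interspace} and the nesting $CK_k^{2}\subseteq CK_{k+1}^{2}$ — and keeping the double sum correctly indexed when Lemma~\ref{lemma:approx_lin} is applied termwise; both are routine bookkeeping.
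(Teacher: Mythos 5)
Your proposal is correct and follows essentially the same route as the paper's own proof: decompose via the best approximation property and triangle inequality, choose the trial vector $\vec{v} = \gamma_{j1}\vec{b} + \sum_{k=1}^{j-1}\gamma_{j(k+1)}(\vec{\sigma})\,\widehat{L}_k\vec{\sigma}^{\otimes k} \in CK_j^2$, and bound each term $\|\widehat{L}_k\vec{\sigma}^{\otimes k}-A(\vec{\sigma})^k\vec{b}\|_2$ by Lemma~\ref{lemma:approx_lin} together with the norm equivalence~\eqref{eq:nequiv}. You even supply details the paper leaves implicit (the membership check for $\vec{v}$ and the correct exponent $k-l$ when applying the lemma, where the paper's displayed bound~\eqref{eq:apu} carries a typographical $j-l$), so no gap remains.
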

\begin{proof} %
The proof follows by the best approximation property~\eqref{eq:chebest} and estimating how accurately $\vec{v}^*_j$ can be approximated in $CK^2_j$. By definition \eqref{eq:vstar}, 
\begin{equation*}
\vec{v}_j^*(\vec{\sigma})
= \sum^j_{k=1} \gamma_{jk}(\vec{\sigma}) A^{k-1}(\vec{\sigma}) \vec{b}
= \gamma_{j1} \vec{b} + \sum^{j-1}_{k=1} \gamma_{j(k+1)}(\vec{\sigma}) 
A^{k}(\vec{\sigma}) \vec{b}.
\end{equation*}
Let $\vec{v} = \gamma_{j1} \vec{b} + \sum_{k=1}^{j-1} \gamma_{j(k+1)}(\vec{\sigma}) \widehat{L}_{k} \vec{\sigma}^{\otimes k}$. Then
\begin{equation*}
\vec{v}_j^*(\vec{\sigma}) - \vec{v}
= \sum^{j-1}_{k=1} \gamma_{j(k+1)}(\vec{\sigma}) ( A(\vec{\sigma})^k - \widehat{L}_{k} \vec{\sigma}^{\otimes k} ).
\end{equation*}
Lemma~\ref{lemma:approx_lin} states that 
\begin{equation}
\label{eq:apu}
\|\widehat{L}_{k} \vec{\sigma}^{\otimes k} - A(\vec{\sigma})^k \vec{b} \|_2 \leq \sum_{l=1}^k \delta_l \| A(\vec{\sigma}) \|_2^{j-l}  \| \vec{\sigma}^{\otimes l} \|_2.
\end{equation}
Error estimate follows by utilising~\eqref{eq:nequiv}, triangle inequality, and \eqref{eq:apu}
\end{proof}
Observe that $\widehat{L}_k \in \mathbb{R}^{n \times s^k}$, i.e., it's  dimension increases exponentially with $k$. Next, we give a practical method for computing a basis for $CK_j^2$ without using matrices $\widehat{L}_k$. Instead, we use another sequence $\{\widehat{C}_k\}$ satisfying
\begin{equation}
\label{eq:rangeLC}
\mathop{range}(\hat{C}_k) = \mathop{range}(\hat{L}_k) \quad \mbox{for all $k\in \mathbb{N}$}.
\end{equation}
 \begin{definition} 
 \label{process:inter2} 
 Let $A:\mathbb{R}^s \mapsto \mathbb{R}^{n\times n}$ be linear and $L$ be the linearisation function of $A$. In addition, let $\{\delta_k\}_{k=1}^{j}$ be the set of cut-off tolerances. The sequence $\{\widehat{C}_k\}_{k=1}^{j}$ associated to triplet $(A, \vec{b}, \{\delta_k\}_{k=1}^{j})$ is defined as follows: $\widehat{C}_0 = \vec{b}$, and $\widehat{C}_{k+1} = \widehat{U}_{kr} \widehat{\Sigma}_{kr}$, where $(\widehat{U}_{kr} ,\widehat{\Sigma}_{kr},\widehat{V}_{kr})$ is the $\delta_k$-accurate low-rank approximation of $L(\widehat{C}_k)$. 
\end{definition} 
 \begin{theorem}
 \label{thm:C2L} Let $\{ \widehat{L}_k \}_{k=0}^j$ and $\{\widehat{C}_j\}_{k=0}^{j}$ be as in Definitions \ref{process:CK2} and \ref{process:inter2}, respectively. Then for any $k \in \{0,\ldots, j\}$ there exists a unitary $Q_k$ such that 
  \begin{equation}
    \label{eq:dropU}
    \widehat{C}_k = \widehat{L}_k Q_k^T. 
\end{equation} 
 \end{theorem}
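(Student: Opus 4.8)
The plan is to prove the claim by induction on $k$, carrying a slightly stronger hypothesis: for each $k$ there is a matrix $Q_k$ with orthonormal \emph{rows}, $Q_k Q_k^T = I$, such that $\widehat{L}_k = \widehat{C}_k Q_k$. Multiplying this identity on the right by $Q_k^T$ gives $\widehat{C}_k = \widehat{L}_k Q_k^T$, which is exactly the assertion of the theorem (here ``unitary'' is to be read as ``having orthonormal rows''; when no singular values are discarded at any step, $Q_k$ is square and is a genuine unitary). The base case $k=0$ is immediate, since $\widehat{L}_0 = \widehat{C}_0 = \vec{b}$ and one may take $Q_0 = 1 \in \mathbb{R}^{1 \times 1}$.

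For the inductive step, suppose $\widehat{L}_k = \widehat{C}_k Q_k$ with $Q_k Q_k^T = I$. Pulling $Q_k$ through the linearisation function using Definition~\ref{def:Lfun},
\[
L(\widehat{L}_k) = L(\widehat{C}_k Q_k) = \begin{bmatrix} A_1 \widehat{C}_k Q_k & \cdots & A_s \widehat{C}_k Q_k \end{bmatrix} = L(\widehat{C}_k)\, P, \qquad P := I_s \otimes Q_k,
\]
where $P$ is the block-diagonal matrix carrying $s$ copies of $Q_k$. Since $Q_k$ has orthonormal rows so does $P$, i.e.\ $P P^T = I_s \otimes (Q_k Q_k^T) = I$, and therefore the normal forms agree,
\[
L(\widehat{L}_k) L(\widehat{L}_k)^T = L(\widehat{C}_k)\, P P^T L(\widehat{C}_k)^T = L(\widehat{C}_k) L(\widehat{C}_k)^T .
\]
Consequently $L(\widehat{L}_k)$ and $L(\widehat{C}_k)$ have the same left singular vectors and the same singular values, so a fixed cut-off tolerance $\delta$ truncates both at the same rank $r$. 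Fixing an SVD $L(\widehat{C}_k) = U \Sigma V^T$, the identity above shows that $L(\widehat{L}_k) = U \Sigma (P^T V)^T$ is an SVD of $L(\widehat{L}_k)$ (indeed $P^T V$ has orthonormal columns, since $(P^T V)^T(P^T V) = V^T P P^T V = V^T V = I$). With these compatible choices, Definitions~\ref{process:inter2} and~\ref{process:CK2} give $\widehat{C}_{k+1} = U_r \Sigma_r$ and
\[
\widehat{L}_{k+1} = U_r \Sigma_r (P^T V_r)^T = U_r \Sigma_r V_r^T P = \widehat{C}_{k+1}\, (V_r^T P).
\]
Setting $Q_{k+1} := V_r^T P$ closes the induction: $\widehat{L}_{k+1} = \widehat{C}_{k+1} Q_{k+1}$ and $Q_{k+1} Q_{k+1}^T = V_r^T P P^T V_r = V_r^T V_r = I$, and multiplying on the right by $Q_{k+1}^T$ yields $\widehat{C}_{k+1} = \widehat{L}_{k+1} Q_{k+1}^T$. (The cut-off indices in Definitions~\ref{process:CK2} and~\ref{process:inter2} are read so that $\widehat{L}_{k+1}$ and $\widehat{C}_{k+1}$ use the same $\delta$.)

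I expect the one genuinely delicate point to be organisational rather than conceptual: keeping the (different, exponentially- versus polynomially-growing) column dimensions of $\widehat{L}_k$ and $\widehat{C}_k$ straight, and noticing that the factor extracted from $L$ must have orthonormal \emph{rows} — this is what forces $PP^T = I$ and hence the equality of normal forms; had $Q_k$ only had orthonormal columns the argument would fail. The non-uniqueness of the SVD is a minor nuisance that does not affect the statement, which asserts only the existence of $Q_k$: any other admissible choice of truncated SVD differs from the one used by a unitary right factor, which is simply absorbed into $Q_{k+1}$.
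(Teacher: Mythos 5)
Your proof is correct and follows essentially the same route as the paper's: induction on $k$, pulling the orthogonal factor through the linearisation function as $L(\widehat{C}_k Q_k) = L(\widehat{C}_k)(I\otimes Q_k)$, and transferring the truncated SVD between $L(\widehat{C}_k)$ and $L(\widehat{L}_k)$. Your bookkeeping is in fact tighter than the paper's: by carrying the hypothesis $\widehat{L}_k=\widehat{C}_k Q_k$ with $Q_k Q_k^T=I$ you make the SVD-transfer step immediate, whereas the paper asserts that $I\otimes Q_k^T$ is ``unitary'' even though $Q_k$ is no longer square after truncation, and so implicitly relies on the row space of $\widehat{L}_k$ lying in the range of $Q_k^T$.
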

\noindent In other words, the condition \eqref{eq:rangeLC} holds.
 \begin{proof} The proof is by induction. By definition $\widehat{C}_0 = \widehat{L}_0 = \vec{b}$. Thus \eqref{eq:dropU} holds with $U_0$ being the identity matrix. Next, assume that \eqref{eq:dropU} holds for some $k\in \mathbb{N}$, i.e., $\widehat{C}_k = \widehat{L}_{k} Q_k$ for some unitary $Q_k$. By Processes~\ref{process:CK2} and \ref{process:inter2}, $\widehat{L}_{k+1}$ and $\widehat{C}_{k+1}$ are obtained as $\delta_{k+1}$-accurate low rank approximations of $L(\widehat{L}_{k})$ and $L(\widehat{C}_k)$, respectively. By relation \eqref{eq:dropU}, 
 \begin{equation*}
     L(\widehat{C}_k) = L( \widehat{L}_k Q_k) = \begin{bmatrix}A_1 \widehat{L}_k Q_k^T & \cdots & A_s \widehat{L}_k Q_k^T \end{bmatrix} = L(\widehat{L}_k) (I\otimes Q_k^T).
 \end{equation*}
 Let $U_k \Sigma_k V^T_k$ for $\Sigma_k = \mathop{diag}(\sigma_1,\ldots,\sigma_n)$ be the SVD of $L(\widehat{L}_k)$. As $(I \otimes Q_k^T)$ is unitary, $U_k \Sigma_k V_k^T (I \otimes Q_k^T)$ is the SVD of $L(\widehat{C}_k)$. Let $r$ be the cut-off index satisfying $\sigma_{r+1} \leq \delta_{k+1} < \sigma_{r}$. Then
 \begin{equation*}
     \widehat{L}_{k+1} = U_k(:,1:r) \Sigma_k(1:r,1:r) V_k(:,1:r)^T 
 \end{equation*}
 and
 \begin{equation*}
     \widehat{C}_{k+1} = U_k(:,1:r) \Sigma_k(1:r,1:r). 
 \end{equation*}
This is, $Q_{k+1} = V_k(:,1:r)$. 
\end{proof}

\begin{remark} 
\label{remark:projection2} Constructing $CK_j^2$ requires combining bases of two subspaces. To simplify this step, we propose to compute two low-rank approximations:  First low-rank approximation corresponds to $L(C_k)$ and it is used to define $C_{k+1}$. Second low-rank approximation corresponds to $(I-\pi_k)L(C_k)$, where $\pi_k$ is the orthogonal projection in Euclidean inner product to $CK_{k}^2$. The largest singular vectors of  $(I-\pi_k) L(C_k)$ are then used to define $CK_{k+1}^2$. This process admits similar error estimate to the simpler variant analysed in this section. In Section~\ref{sec:practical} we further simplify the construction of $CK^2_k$ by using the low-rank approximation of $(I-\pi_k) L(C_k)$ instead of $C_k$ to compute an approximation to $C_{k+1}$. This variant performs well and is simple, but requires additional error analysis. 
\end{remark}

\section{Computational considerations} 
\label{sec:practical} In this Section, we briefly describe our implementation of the two approximate CK-solvers for \eqref{eq:linsys}. The inputs are $\{A_i\}_{i=1}^s \subset \mathbb{R}^{n\times n}$, $\vec{b} \in \mathbb{R}^n$, cut-off tolerances $\{\delta_k\}_{k=1}^{j-1}$, and the order of the CK space $j$. The tolerances $\{\delta_k\}_{k=1}^{j-1}$ corresponding to the desired accuracy $tol$ can be chosen using Theorems~\ref{thm:error1} and \ref{thm:error2}. However, this is relatively complicated and requires estimates for $\| \vec{\sigma}^{\otimes k}\|_2$, $\kappa(A(\vec{\sigma}))$, coefficients $\gamma_{jk}(\vec{\sigma})$, and $\|A(\vec{\sigma})\|_2$. Hence, in our numerical examples, we simply set $\delta_k = \delta$. The desired index of the Krylov subspace is obtained from upper bound for the condition number $\kappa(A(\vec{\sigma}))$.

The algorithm computing a basis for the compound Krylov subspaces of the first kind described in Sec. \ref{sec:direct_svd} is given in Alg. \ref{alg:CK1}. It uses the orthogonal projection technique of Remark \ref{remark:projection}.  The algorithm for computing a basis for the compound Krylov subspaces of the second kind described in Sec. \ref{sec:inter_svd} is given in Alg. \ref{alg:CK2}. The approximate variant of Remark~\ref{remark:projection2} is used. These algorithms return the basis of $CK^1_j$ or $CK^2_j$ as the columns of the matrix $Q\in \mathbb{R}^{n\times k}$.

Either of the computed basis $Q$ can then be used to compute the solution to the linear equation \eqref{eq:linsys} for any $\vec{\sigma} \in S$. The algorithm for obtaining the solution for a given $\vec{\sigma}$ from a subspace with basis $Q$ is written in Alg. \ref{alg:solve}. This function will be called multiple times with multiple parameter vectors $\vec{\sigma}$.
\begin{algorithm}[H]
   \caption{}
   \label{alg:itemset}
    \begin{algorithmic}[1]
    \Function{CK1}{$\{ A_i \}_{i=1}^s,\vec{b},j,\{\delta_k\}_{k=1}^{j-1}$}
    \State{$Q = \vec{b} \| \vec{b} \|_2^{-1}$} 
    \State{$L_0 L_0^T = \vec{b} \vec{b}^T$}. 
    \For{$k=1,\ldots,j-1$} 
            \State{Compute $L_{k}L_{k}^T$ using Lemma~\ref{lemma:normal_form}}.
            \State{Compute eigendecomposition $U_k \Lambda_k U^T_k = (I-Q Q^T)L_{k} L_{k}^T (I-Q Q^T)$}
            \State{Find index $r$ s.t. $\sigma_{r} > \delta_k >= \sigma_{r+1}$}
            \State{Update $Q = \begin{bmatrix} Q & U_k(:,1:r)\end{bmatrix}$}
    \EndFor
    \State \Return $Q$
    \EndFunction
    \end{algorithmic}
    \label{alg:CK1}
\end{algorithm}

\begin{algorithm}[H]
   \caption{}
   \label{alg:itemset2}
    \begin{algorithmic}[1]
    \Function{CK2}{$\{ A_i \}_{i=1}^s,\vec{b},j,\{\delta_k\}_{k=1}^{j-1}$}
    \State{$Q = \vec{b} \| \vec{b} \|_2^{-1}$} 
    \State{$C_0 = \vec{b}$}. 
    \For{$k=1,\ldots,j-1$} 
            \State{Compute SVD $U_k \Sigma_k V^T_k = (I-QQ^T)L(C_{k-1})$}
            \State{Find index $r$ s.t. $\sigma_{r} > \delta_k >= \sigma_{r+1}$}
            \State{Update $Q = \begin{bmatrix} Q & U_k(:,1:r)\end{bmatrix}$}
            \State{Update $C_k = U_k(:,1:r) \Sigma_k(1:r,1:r)$}
    \EndFor
    \State \Return $Q$
    \EndFunction
    \end{algorithmic}
    \label{alg:CK2}
\end{algorithm}

\begin{algorithm}[H]
   \caption{}
   \label{alg:itemset3}
    \begin{algorithmic}[1]
    \State{Precompute $\widehat{\vec{b}} = Q^T \vec{b}$, \; $\widehat{A}_i = Q^T A_i Q$} 

    \Function{solve}{$\{ \widehat{A}_i \}_{i=1}^s$, $\widehat{\vec{b}}$, $Q$, $\vec{\sigma}$}
    \For{$k=1,\ldots,n$} 
        \State{Compute $\widehat{A} = \sum_{i=1}^s \widehat{A}_i \sigma_i$}
        \State{Solve $\widehat{A} \vec{x} = \widehat{b}$}.
    \EndFor

    \State \Return $\{ Q \vec{x} \}$.
    \EndFunction
    \end{algorithmic}
    \label{alg:solve}
\end{algorithm}

\section{Numerical Examples}
\label{sec:num}

\subsection{Piecewise constant material parameter} First, we demonstrate the proposed CK methods by solving the parametric linear system given in Section \ref{sec:matparam}. We use $N \times M$ checkerboard patterns for multiple values of $N$ and $M$. The parameter set is chosen as $S = \{ \vec{\sigma} \in \mathbb{R}^s \; | \; \sigma_i \in [1,a] \quad \mbox{for $i \in \{1,\ldots,s\}$} \; \}$ for $a=20$. We investigated empirically the error between the exact and the CK subspace solutions for different values of the cut-off tolerance and index $j$. 

To compute the error we used random sampling strategy, where we first constructed a random set of $\vec{\sigma}$ vectors so that each element $\sigma_i$ was drawn from a uniform distribution $\sigma_i \sim \mathcal{U}(1,20)$. We then calculated the error of the compound Krylov method compared to solving the linear system using the Matlab backslash. All errors were calculated in the $A(\vec{\sigma})$-norm. Unless stated otherwise, the dimension of the FE space in this test was $961$. The vector $\vec{b}$ corresponds to the constant loading $f=1$.

We found that using a constant cut-off tolerance $\delta_k$ was sufficient even though our estimates suggest fine-tuning it for each round of iteration separately. Our examples below use the value $\delta_k = \delta = 10^{-7}$.

The largest errors out of 100 randomly drawn vectors $\vec{\sigma}$ for different orders of the CK subspace are plotted in Fig. \ref{fig:errors_checkerboard}. The improvement begins exponentially and then slows down when the error approaches the cut-off tolerance $\delta$. The number of subdomains does not affect the error very much. Observe, that the $4$-subdomain case converges a bit slower but still reaches similar error levels for $j=5$. 

The errors for different randomly drawn $\vec{\sigma}$ vectors are visualised as scatter plots in Fig. \ref{fig:scatter}. The larger value of the variable $\sigma_{\max}/\sigma_{\min}$ is related to a larger condition number $\kappa_2(A(\vec{\sigma}))$, and on average a larger error. For the $2 \times 2$ checkerboard configuration there sometimes are extremely accurate CK-solutions even when the ratio $\sigma_{\max}/\sigma_{\min}$ is large.

The dimensions of the compound Krylov subspaces is given in Table \ref{table:dimensions}. It seems that a larger FE space does not affect the size of the subspace very much. It's worth noting that we've managed to lower the size of the subspace quite significantly from the original FE space dimension. The errors obtained with the cut-off tolerances we used in this example are quite small, in practical applications one might afford to have larger errors and therefore possibly even smaller subspaces.

\begin{table}[h]
    \begin{center}
    \begin{tabular}{ |c|c|c|c| }
    \hline
    \begin{tabular}{@{}c@{}}Number of \\ subdomains\end{tabular} & 
    Method & 
    \begin{tabular}{@{}c@{}}Subspace dimension \\ (FE space dim. 961)\end{tabular}
    & \begin{tabular}{@{}c@{}}Subspace dimension \\ (FE space dim. 3969)\end{tabular}\\
    \hline
    4 ($2\times 2$) & direct & 21 & 21 \\  
    4 ($2\times 2$)& intermediate & 21 & 21\\ 
    \hline
    8 ($2\times 4$)& direct & 77 & 83 \\  
    8 ($2\times 4$)& intermediate & 73 & 83\\
    \hline
    16 ($4\times 4$)& direct & 178 & 208 \\  
    16 ($4\times 4$)& intermediate & 181 & 217\\
    \hline
    \end{tabular}
    \caption{The dimensions of two kinds of compound Krylov subspaces of the order $5$. The cut-off tolerance for singular values is $\delta=10^{-7}$.}
    \label{table:dimensions}
    \end{center}
\end{table}

\begin{figure}
     \centering
     \begin{subfigure}[b]{0.45\textwidth}
         \centering
         \includegraphics[width=\textwidth]{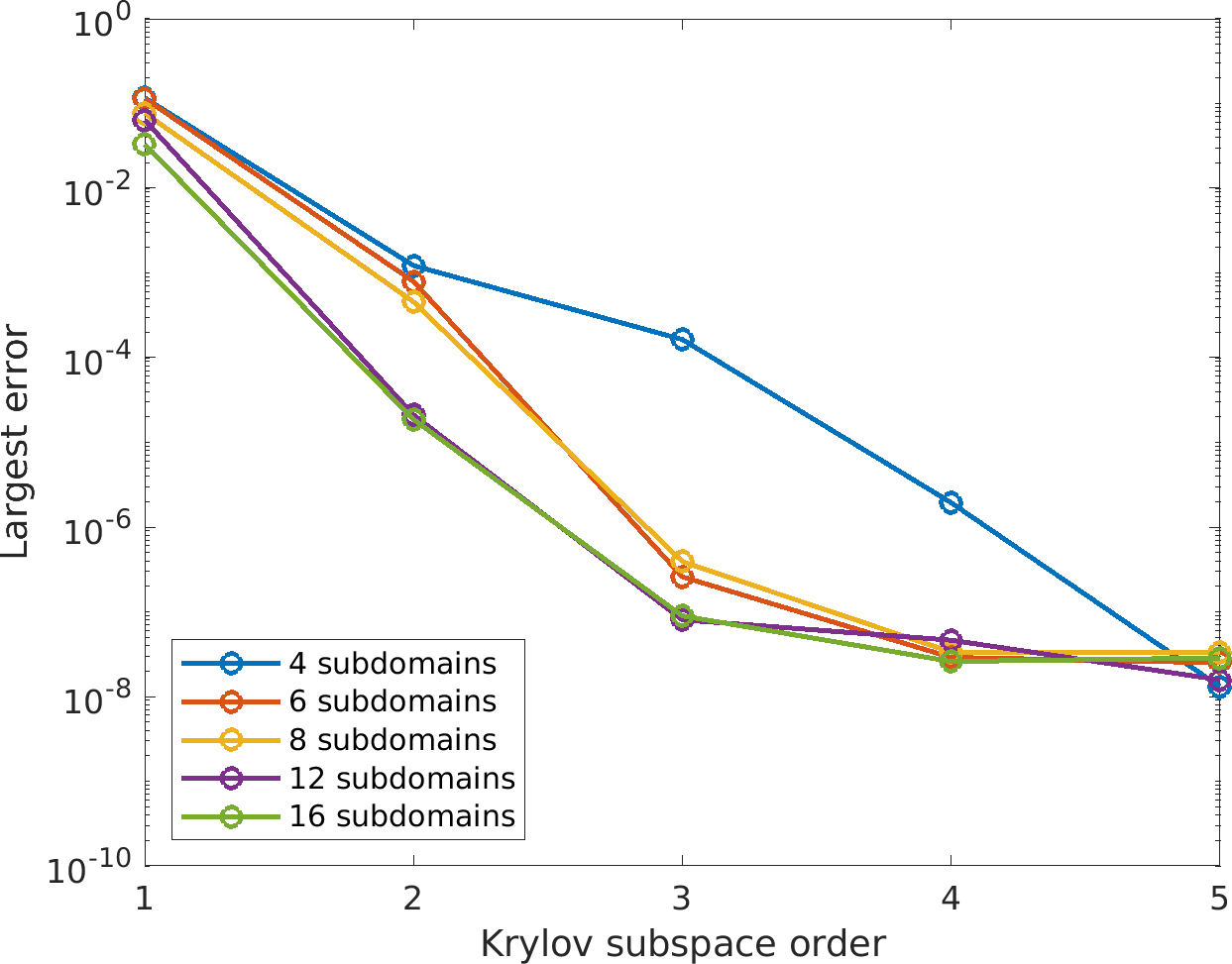}
     \end{subfigure}
     \hfill
     \begin{subfigure}[b]{0.45\textwidth}
         \centering
         \includegraphics[width=\textwidth]{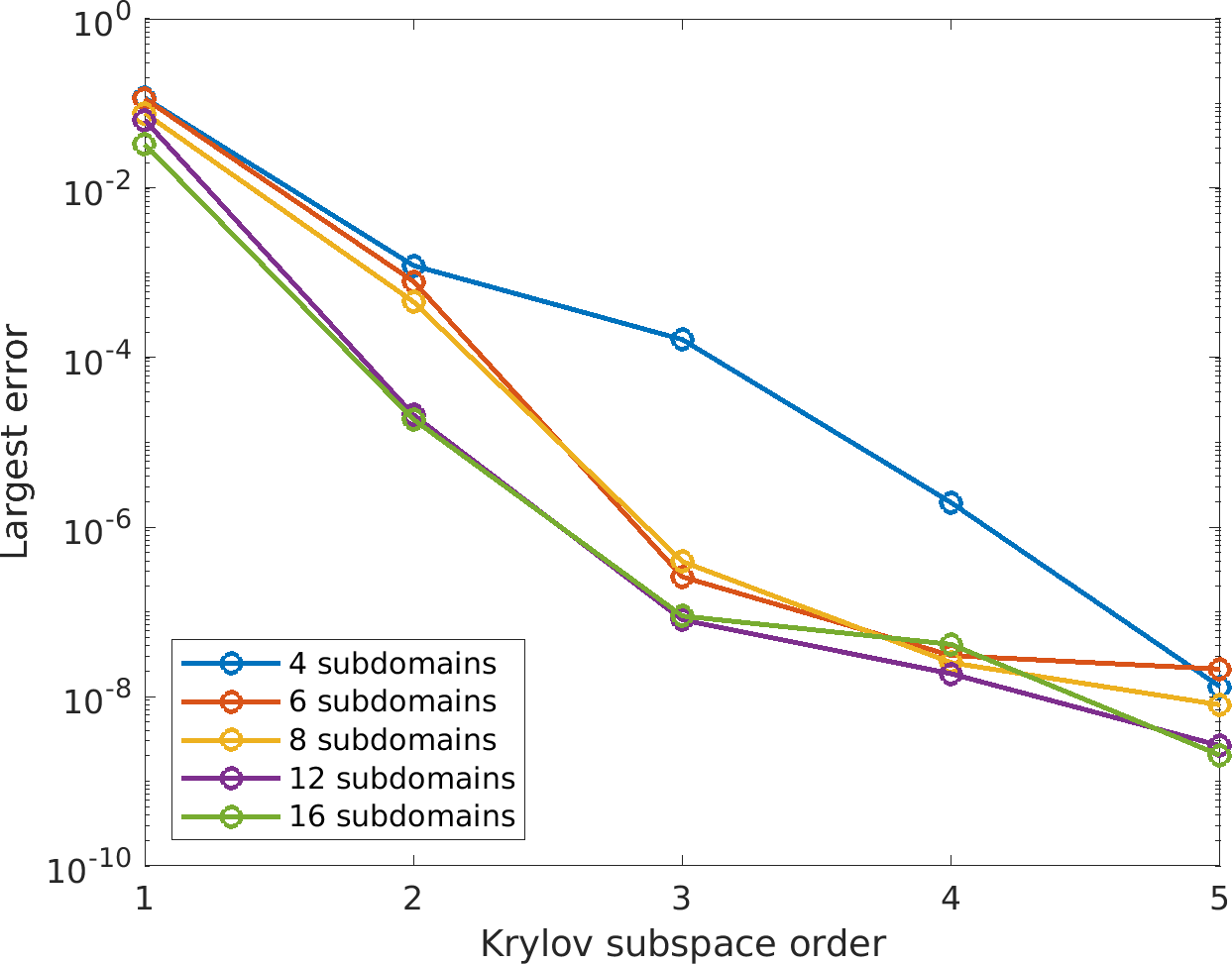}
     \end{subfigure}
     \caption{The largest errors in $A(\vec{\sigma})$-norm for $100$ randomly drawn vectors $\vec{\sigma}$ for the direct method (left) and the intermediate method (right) for different amounts of subdomains with $\delta=10^{-7}$.}
     \label{fig:errors_checkerboard}
\end{figure}

\begin{figure}
     \centering
     \begin{subfigure}[b]{0.45\textwidth}
         \centering
         \includegraphics[width=\textwidth]{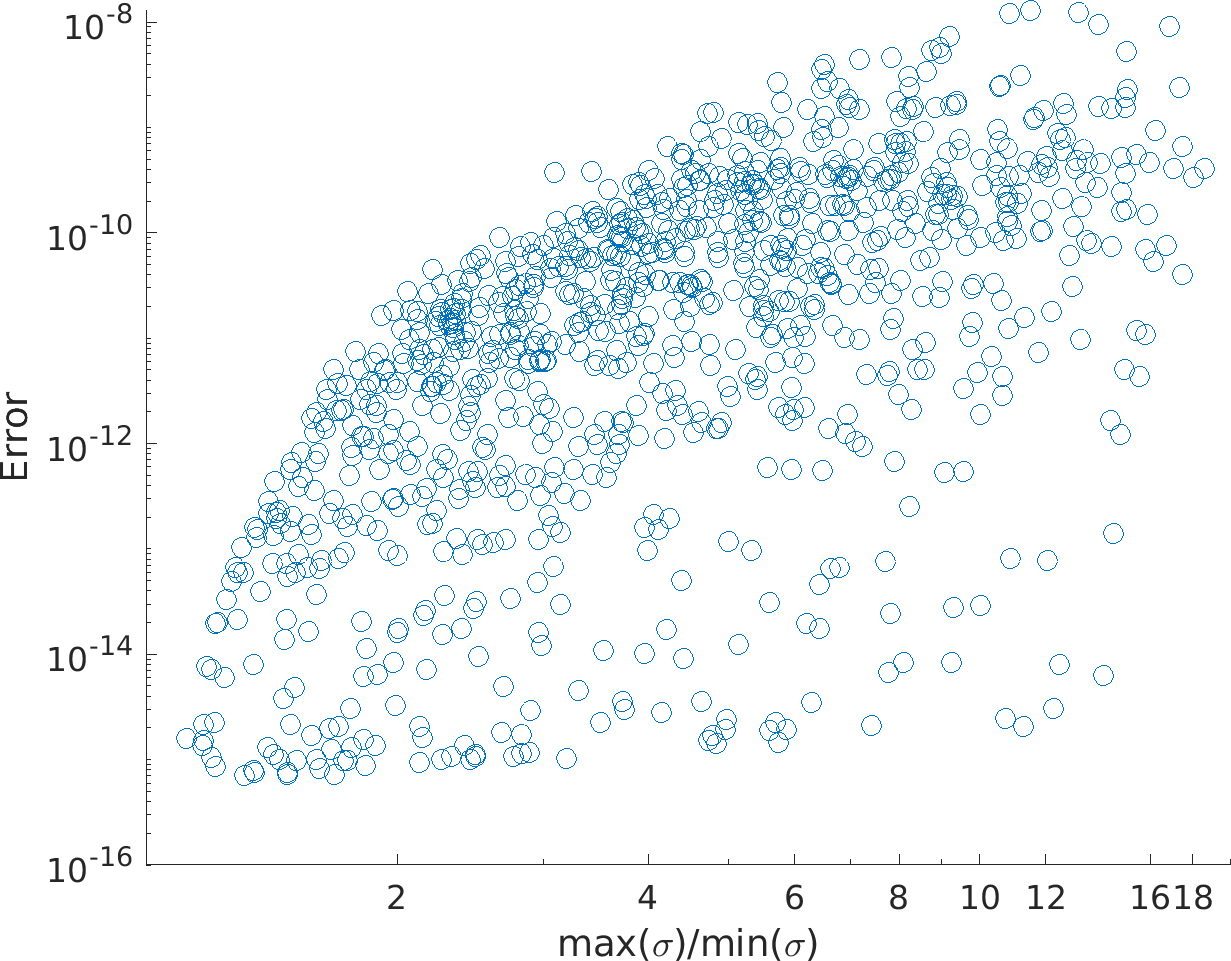}
         \caption{$2\times 2$-subdomains}
     \end{subfigure}
     \hfill
     \begin{subfigure}[b]{0.45\textwidth}
         \centering
         \includegraphics[width=\textwidth]{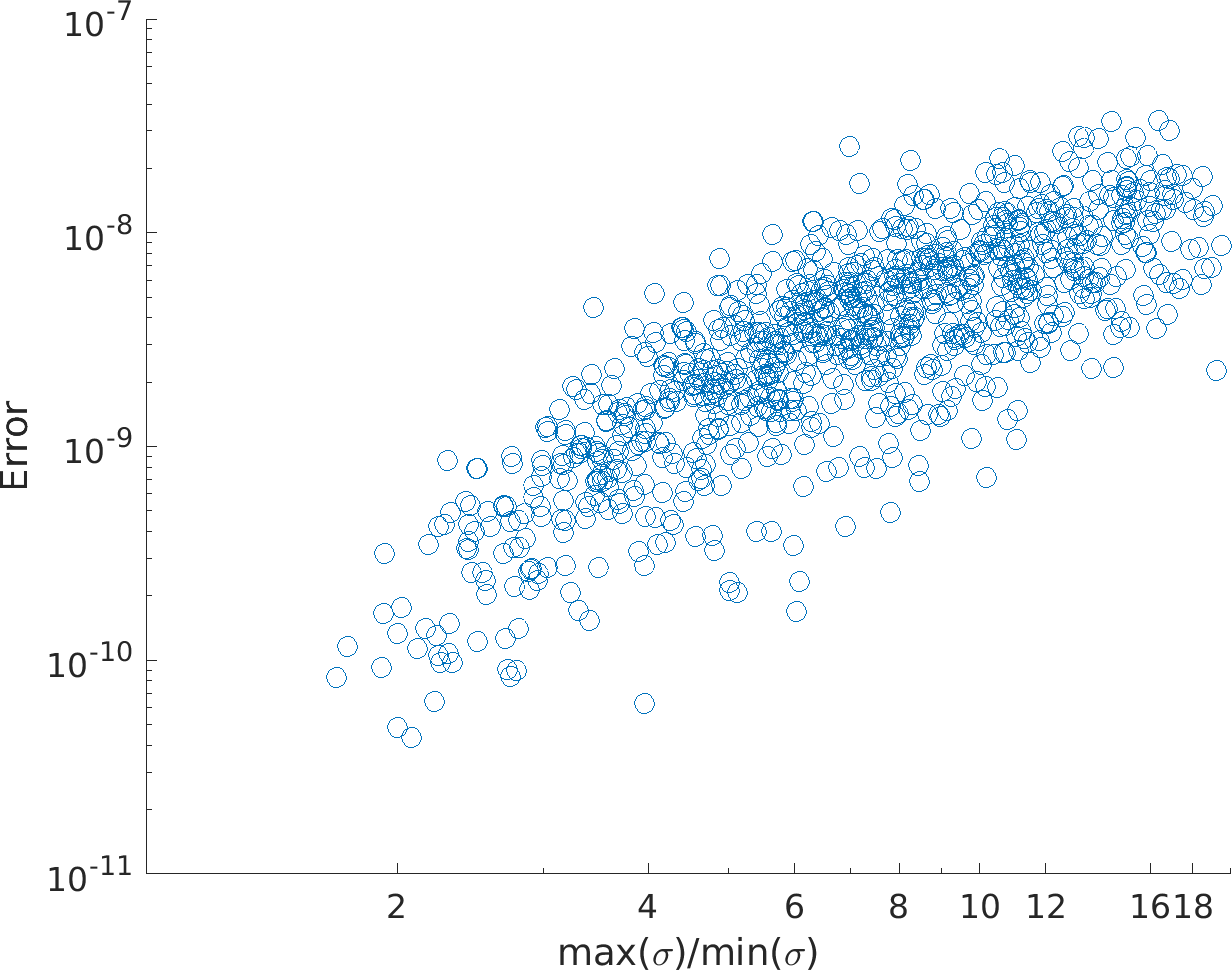}
         \caption{$2\times 4$-subdomains}
     \end{subfigure}
     \hfill
        \caption{Scatter plots of errors corresponding to randomly drawn vectors $\vec{\sigma}$. The subspace $CK^1_5(A,\vec{b})$ is used.}
        \label{fig:scatter}
\end{figure}

\begin{figure}
     \centering
     \begin{subfigure}[b]{0.3\textwidth}
         \centering
         \includegraphics[width=\textwidth]{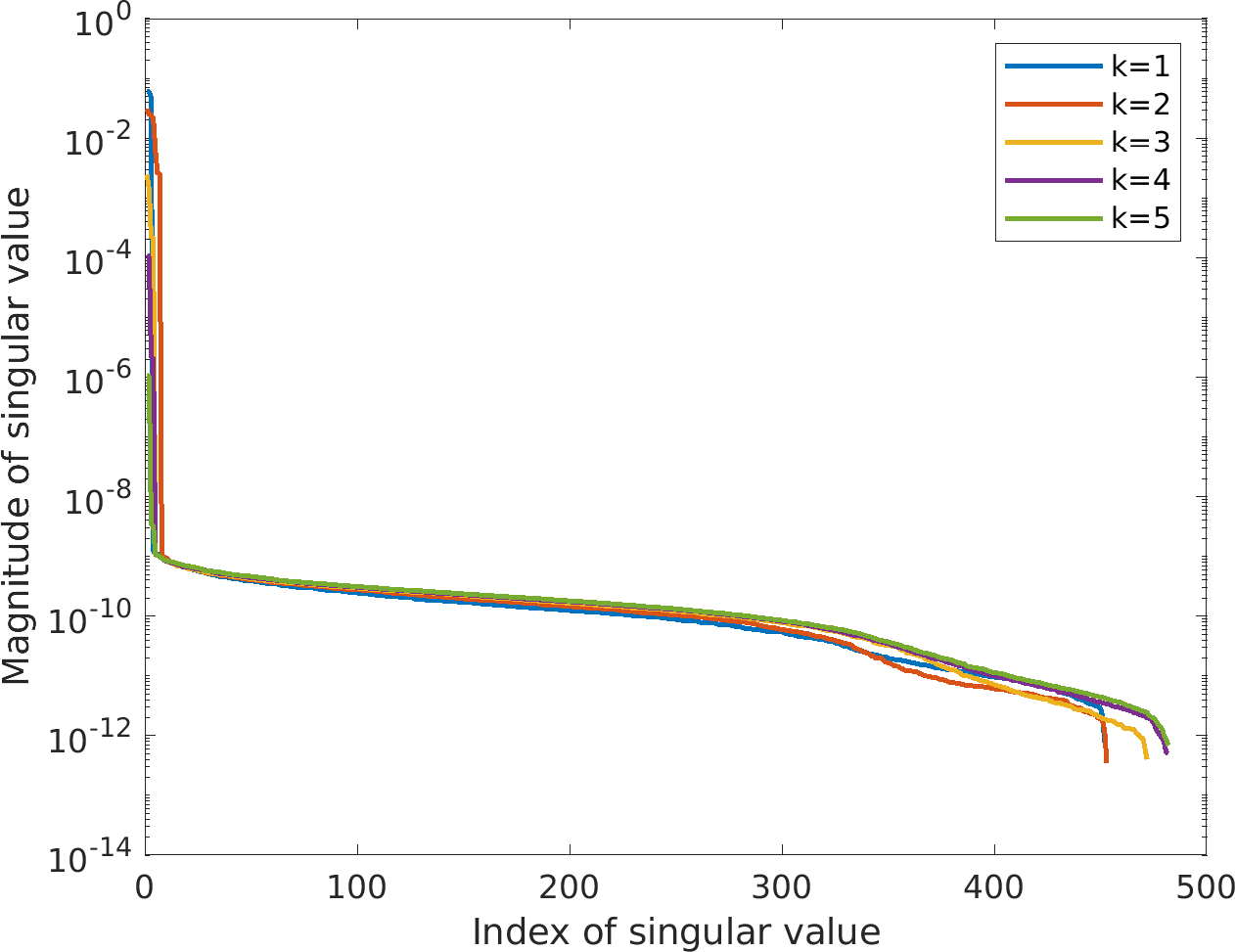}
         \caption{4 subdomains}
     \end{subfigure}
     \hfill
     \begin{subfigure}[b]{0.3\textwidth}
         \centering
         \includegraphics[width=\textwidth]{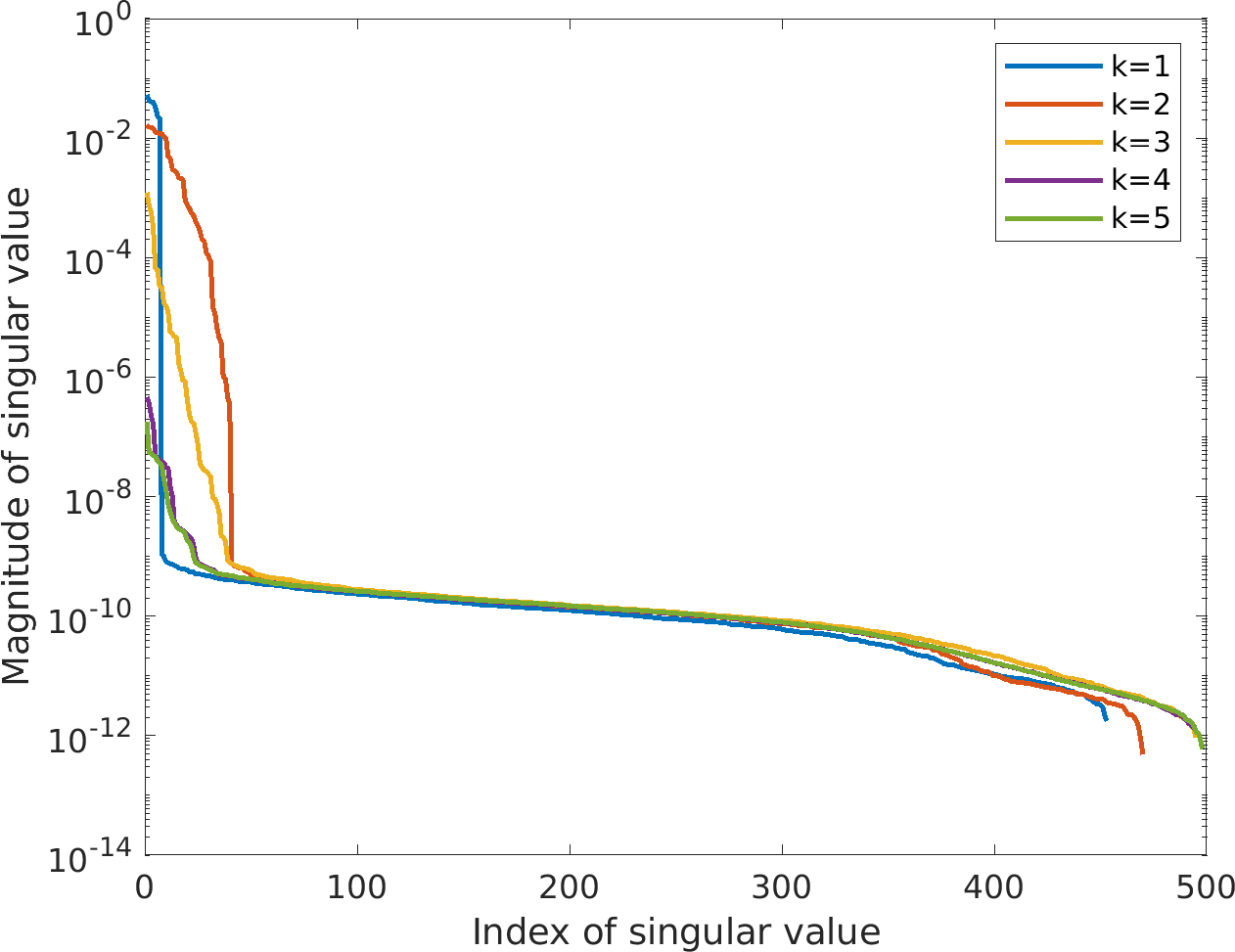}
         \caption{8 subdomains}
     \end{subfigure}
     \hfill
     \begin{subfigure}[b]{0.3\textwidth}
         \centering
         \includegraphics[width=\textwidth]{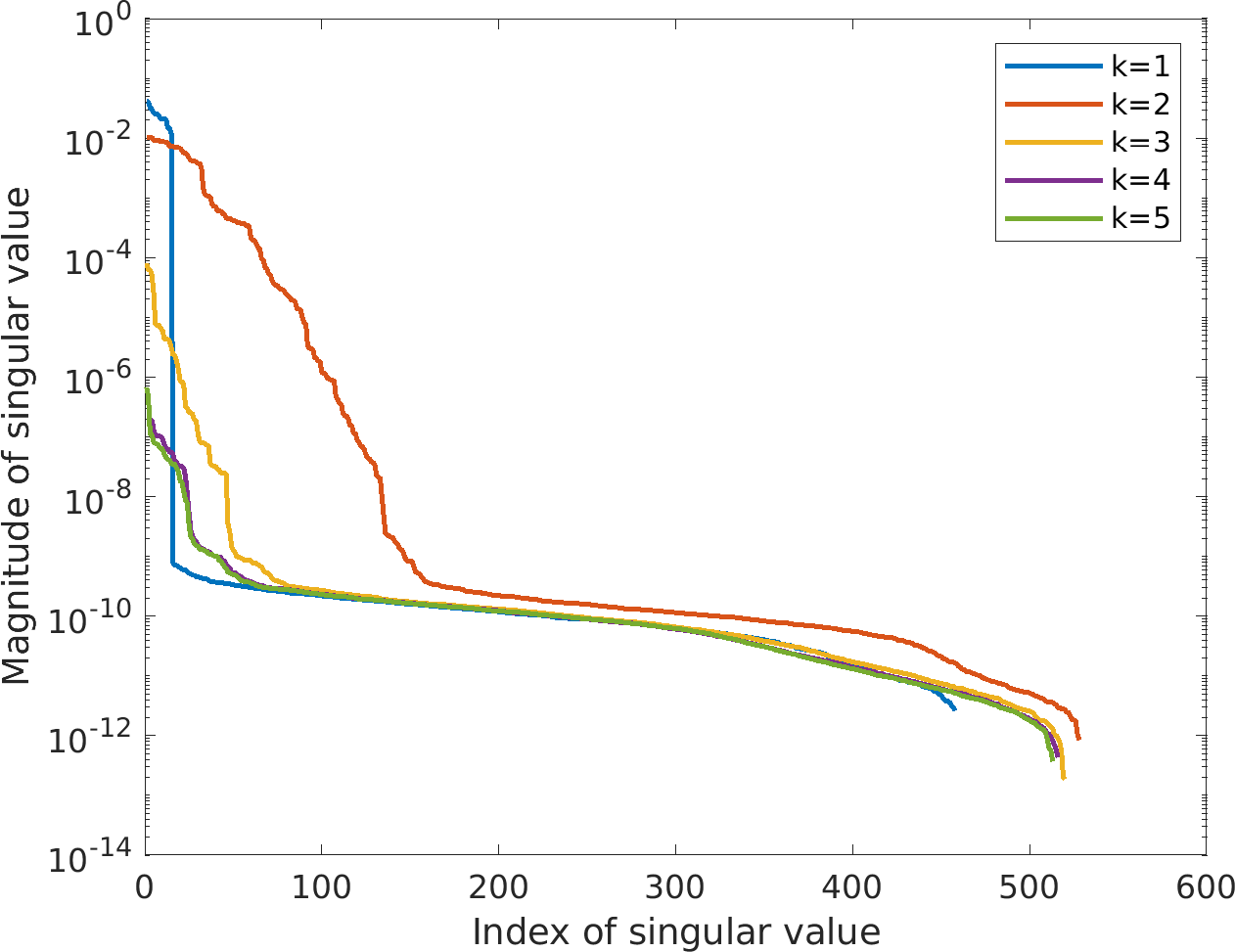}
         \caption{16 subdomains}
     \end{subfigure}
        \caption{The singular values for the projected linearisation matrices for different orders $j$.}
        \label{fig:singular_values}
\end{figure}

The singular values of the projected linearisation matrices for different values of $k$ are plotted in Fig. \ref{fig:singular_values}. Singular values seem to decay exponentially and many of them are numerically zero. As the $CK$ subspaces of the first kind are constructed from the singular vectors corresponding to the largest singular values of the projected linearisation matrix, using the $\delta$-accurate low rank approximation significantly lowers the dimension of this subspace. 

The effect of the singular value cut-off $\delta$ to error is studied in Fig. \ref{fig:cutoff}, where the error is plotted as a function of $\sigma_r$, the smallest singular value satisfying $\sigma_r \geq \delta$. The relationship is quite linear, even for small values of $\delta$. This is actually better that could be expected based on Theorems~\ref{thm:error1} and \ref{thm:error2}, which state that the improvement in the error from reducing the cut-off tolerance should stop at some point due to the error associated with the Krylov subspace itself.

\subsection{Deformation of geometry}
Next, we consider the  example described in Sec. \ref{sec:hole}. The loading is chosen as $f=1$ and the cut-off tolerance as $\delta_k=\delta=10^{-7}$. The error as a function of the translation $l$ is plotted in Fig. \ref{fig:hole_l} for the spaces $CK^1_5$ and $CK^2_5$. As expected by the condition number estimate \eqref{eq:hole_kappa}, the error grows with the translation. For the  largest plotted value of $l$, $l=0.3$, the hole is very close to the boundary of the rectangle so it's intuitive that the error is largest in that case.

The effect of the compound Krylov subspace order $j$ to the errors for the value $l=0.1$ is shown in Fig. \ref{fig:hole_order}. The improvement is exponential with both methods. In this example we do not yet see the improvement flattening out due to the cut-off tolerance $\delta$. The dimension of the FE space used in the tests was 1088. The dimension of the subspaces $CK^1_5$ and $CK^2_5$ were 240 and 238, respectively. The singular values of the projected linearisation matrices corresponding to $CK^1_j$ are shown in Fig. \ref{fig:hole_svs}. 
\begin{figure}
    \centering
    \begin{minipage}[t]{.45\textwidth}
        \centering
        \includegraphics[width=\linewidth]{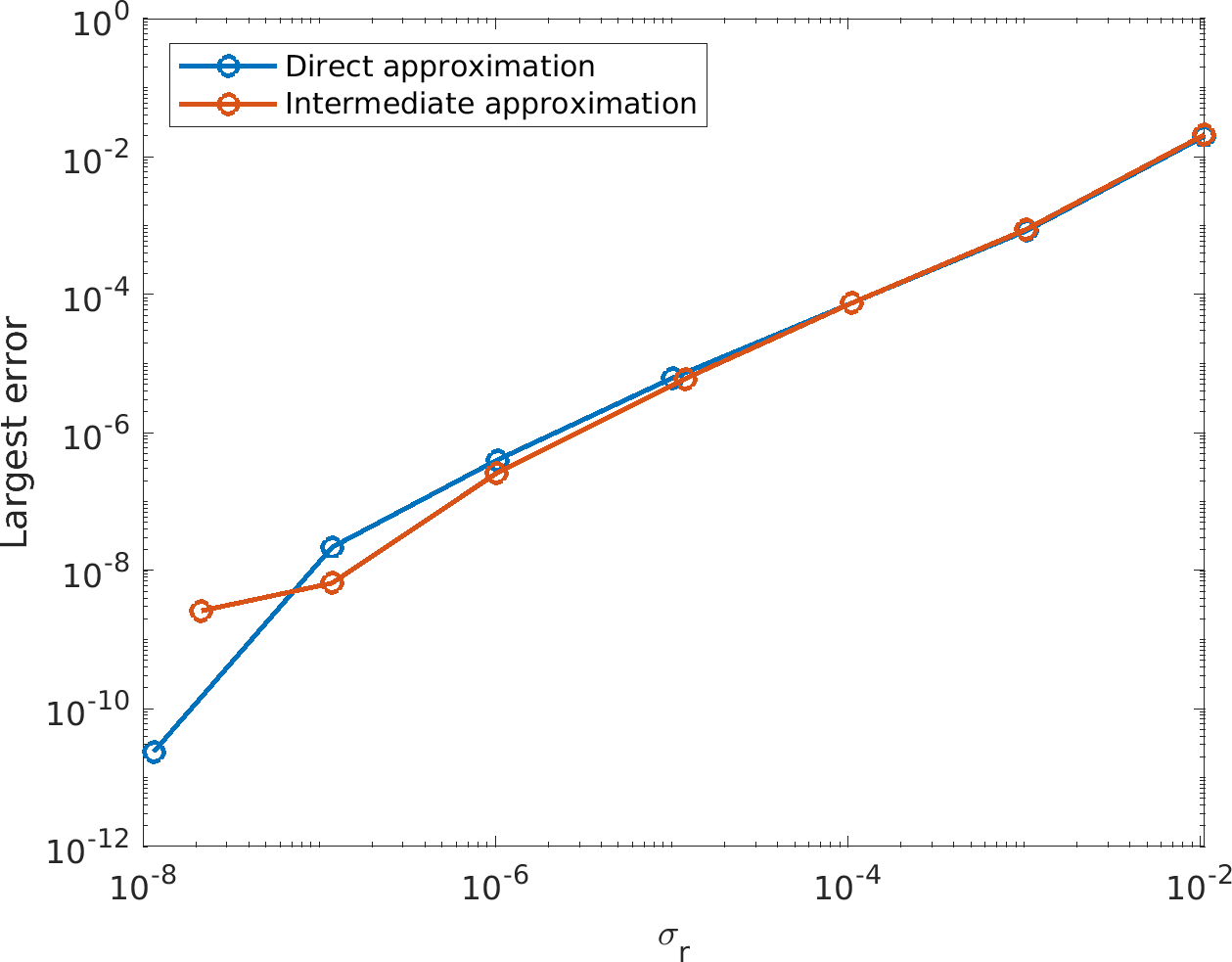}
        \caption{The effect of the singular value cut-off on the $A(\vec{\sigma})$-norm errors using a $2\times 4$ grid of the material parameter example.}
        \label{fig:cutoff}
    \end{minipage}
    \hfill
    \begin{minipage}[t]{.45\textwidth}
        \centering
        \includegraphics[width=\linewidth]{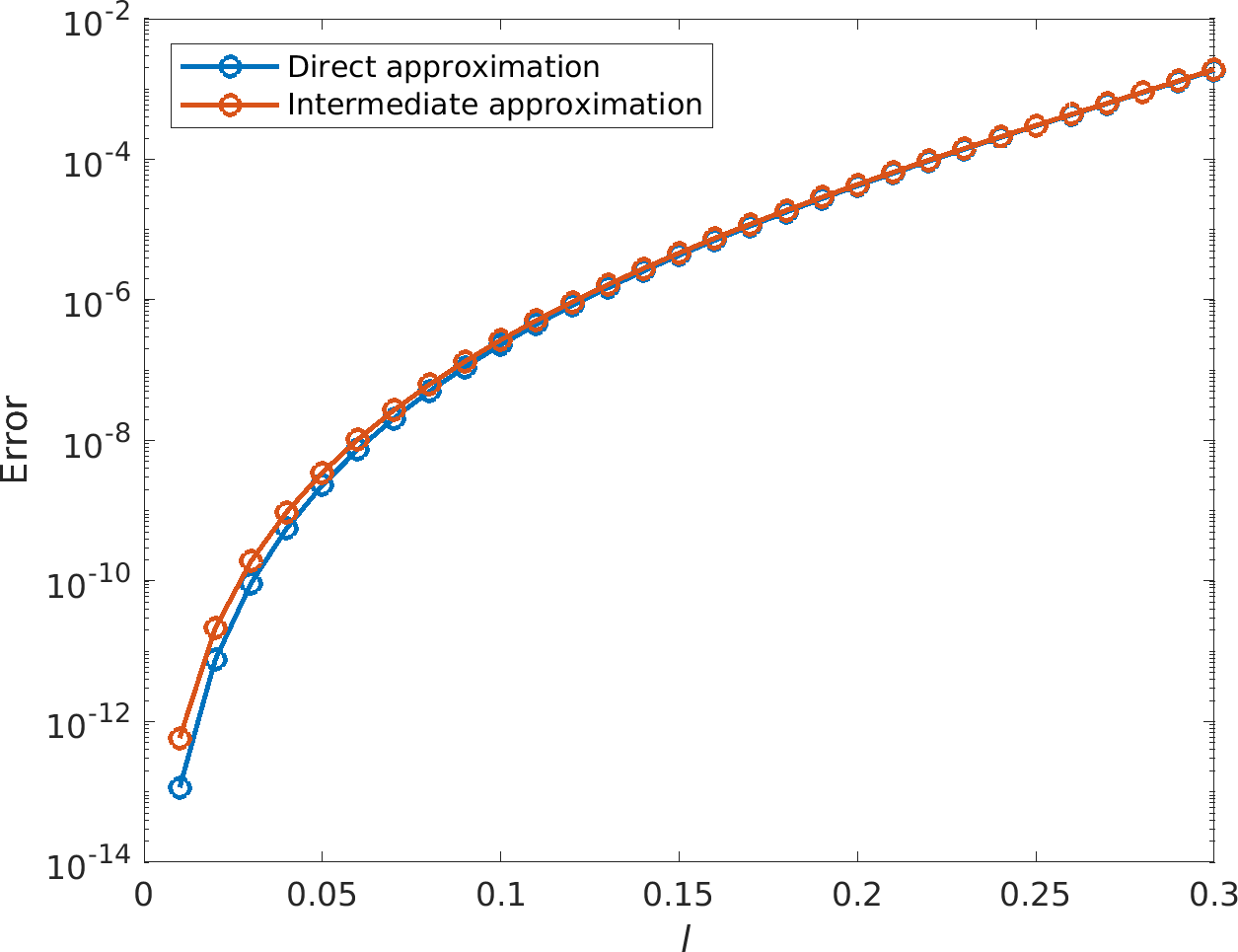}
        \caption{The effect of the translation length $l$ on the $A(\vec{\sigma})$-norm errors of the deformation of geometry example.}
        \label{fig:hole_l}
    \end{minipage}
\end{figure}

\begin{figure}
    \centering
    \begin{minipage}[t]{.45\textwidth}
        \centering
        \includegraphics[width=\linewidth]{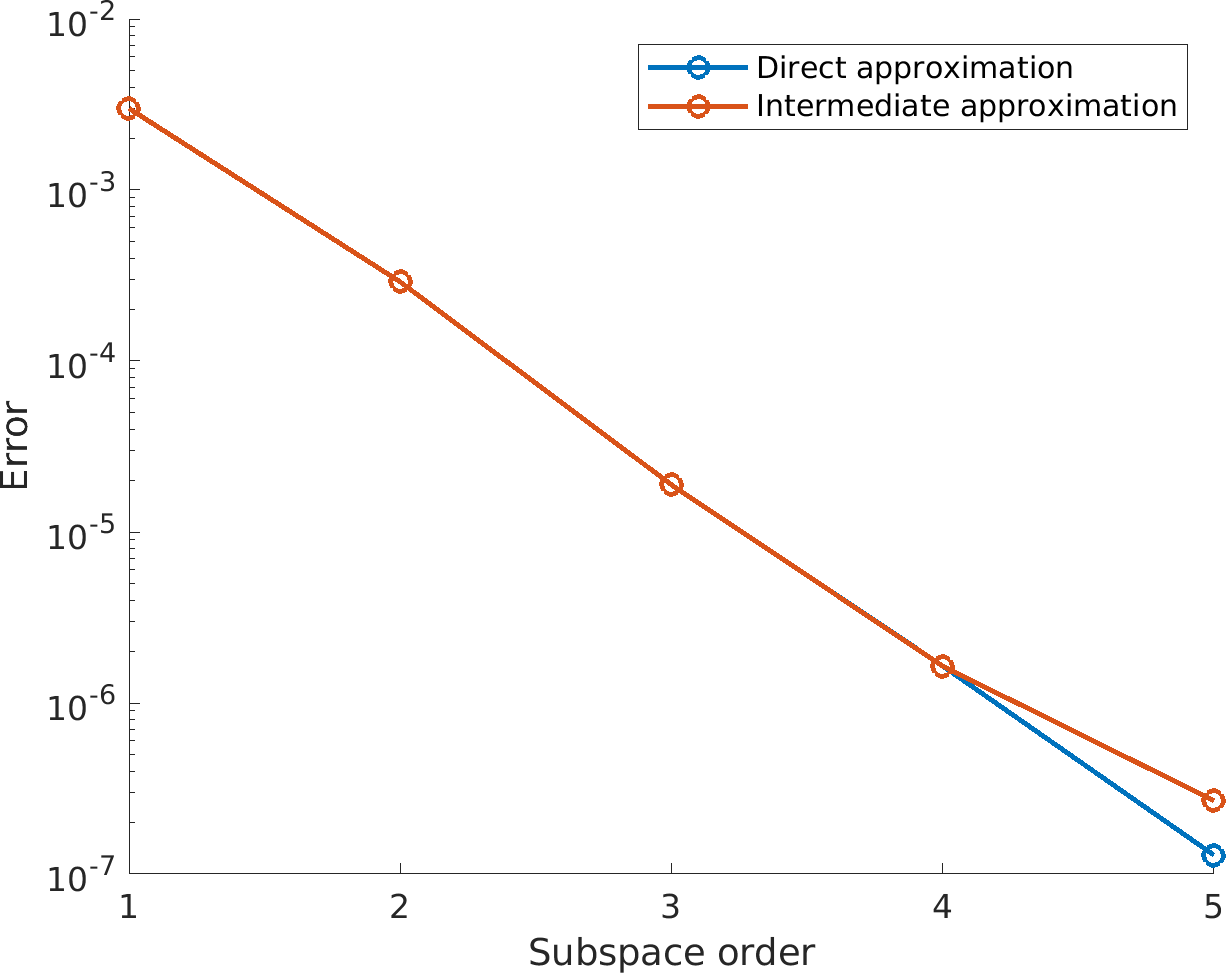}
        \caption{The effect of the Krylov subspace order $j$ to the $A(\vec{\sigma})$-norm errors of the deformation example. The parameter $l=0.1$.}
        \label{fig:hole_order}
    \end{minipage}
    \hfill
    \begin{minipage}[t]{.45\textwidth}
        \centering
        \includegraphics[width=\linewidth]{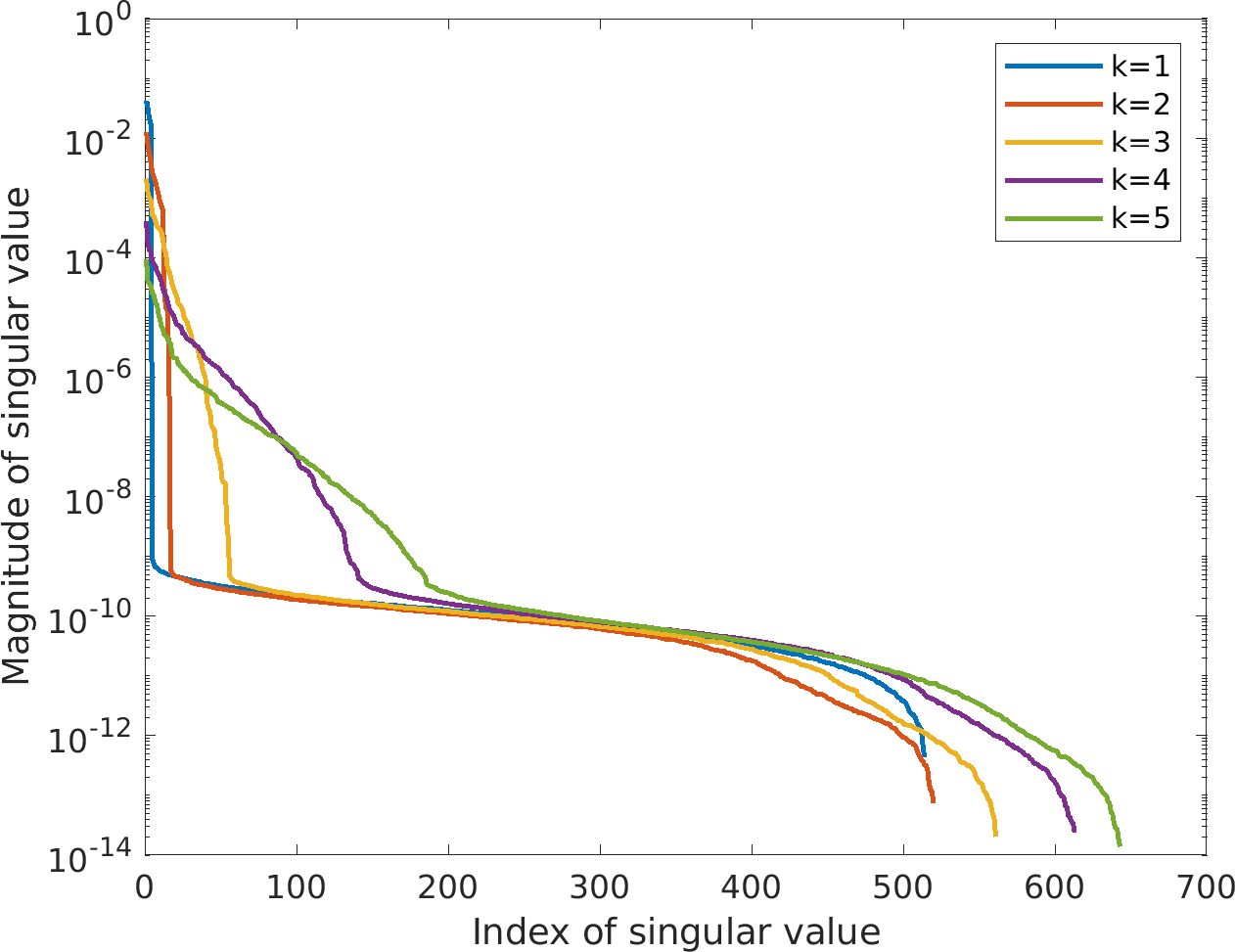}
        \caption{The singular values for the projected linearisation matrices for $CK^1$ with different numbers of iterative rounds in the deformation example.}
        \label{fig:hole_svs}
    \end{minipage}
\end{figure}

\section{Conclusions} \label{sec:con} This article presents a reduced basis method for the solution of the parametric linear system \eqref{eq:linsys}. The proposed method utilises the linearity of the coefficient function $A(\vec{\sigma})$ to construct a basis for associated compound Krylov subspace that contains standard Krylov subspaces for each $\vec{\sigma} \in S$. The basis is computed using linearisation given in Lemma~\ref{lemma:Lprop} and Lemma~\ref{lemma:normal_form}. Two approximate variants utilising low-rank approximations of the linearisation matrices are proposed, see Definitions~\ref{process:svd} and \ref{process:interspace}. The error due to the low-rank approximation is bounded in Theorems~\ref{thm:error1} and \ref{thm:error2}. Practical algorithms for both variants are given in Section~\ref{sec:practical}. Numerical examples illustrate the presented analysis. 

Our numerical examples indicate that the method preforms even better than what the mathematical analysis states. The dimension of the CK subspaces is small due to exponential decay of singular values of the projected linearisation matrices. This is not proven, and presents a challenging topic for future research. Also, the current method does not utilise the sparsity of finite element matrices, that is another topic for future work. 

\bibliographystyle{plain}
\bibliography{master}

\end{document}